\pgfplotsset{compat=newest}
\theoremstyle{plain}
\newtheorem{theorem}{Theorem}[section]
\newtheorem{MainTheorem}{Theorem}
\newtheorem{lemma}[theorem]{Lemma}
\newtheorem{proposition}[theorem]{Proposition}
\newtheorem{conjecture}[theorem]{Conjecture}
\theoremstyle{definition}
\renewcommand{\rho}{\varrho}
\renewcommand{\kappa}{\varkappa}
\newcommand{\eps}{\varepsilon}
\renewcommand{\phi}{\varphi}
\newcommand{\R}{\mathbb{R}}
\newcommand{\var}{\operatorname{Var}}
\newcommand{\rohat}{\widehat{\rho}}
\newcommand{\rotil}{\widetilde{\rho}}
\newcommand{\pov}{\overline{p}}
\newcommand{\roov}{\overline{\rho}}
\newcommand{\x}{\mathfrak{X}}
\newcommand{\y}{\mathfrak{Y}}
\newcommand{\abs}[1]{\left|{#1}\right|}
\begin{document}
\title{Flexibility of entropies for piecewise expanding unimodal maps}
\author{Llu\'{\i}s Alsed\`a}
\address{Departament de Matem\`atiques and Centre de Recerca Matem\`atica,
Edifici Cc,
Universitat Aut\`onoma de Barcelona,
08913 Cerdanyola del Vall\`es,
Barcelona, Spain}
\email{alseda@mat.uab.cat}

\author{Micha{\l} Misiurewicz}
\address{Department of Mathematical Sciences, IUPUI, 402 N. Blackford
  Street, Indianapolis, IN 46202, USA}
\email{mmisiure@math.iupui.edu}

\author{Rodrigo A. P\'erez}
\address{Department of Mathematical Sciences, IUPUI. 402 N. Blackford
  Street, Indianapolis, IN 46202, USA}
\email{rperez@math.iupui.edu}

\thanks{This work was partially supported by grant number 426602
from the Simons Foundation to Micha{\l} Misiurewicz and by the
Spanish grant MTM2017-86795-C3-1-P from the
``Agencia Estatal de Investigaci\'on'' (AEI).
Micha{\l} Misiurewicz also acknowledges the support and hospitality of the
Centre de Recerca Matem\`atica, Barcelona, Spain.
}

\date{February 26, 2020}

\keywords{Flexibility, unimodal maps, skew tent maps,
          piecewise expanding maps, topological entropy,
          metric entropy}

\subjclass[2020]{37A35, 37B40, 37E05}

\begin{abstract}
We investigate the \emph{flexibility} of the entropy
(topological and metric) for the class of piecewise
expanding unimodal maps.
We show that the only restrictions for the values of
the topological and metric entropies in this class
are that both are positive,
the topological entropy is at most $\log 2$,
and by the Variational Principle,
the metric entropy is not larger than the topological entropy.

In order to have a better control on the metric entropy, we work
mainly with topologically mixing piecewise expanding skew tent maps,
for which there are only 2 different slopes.
For those maps, there is an additional
restriction that the topological entropy is larger than
$\frac{1}{2}\log2$.

We also make the interesting observation that for skew tent maps
the sum of reciprocals of derivatives of all iterates of the map at the
critical value is zero. It is a generalization and a different
interpretation of the Milnor-Thurston formula connecting the
topological entropy and the kneading determinant for unimodal maps.
\end{abstract}

\maketitle

\section{Introduction}\label{s-intro}

Recently an important program in Dynamical Systems was initiated by
Anatole Katok. It concerns \emph{flexibility}, that is, the idea that
for a given class of dynamical systems, dynamical invariants (for
instance entropies) can take arbitrary values, subject only to natural
restrictions. Various results in this direction were obtained for
instance in papers~\cite{E, EK, BKRH}.

Here we investigate the family of piecewise expanding unimodal maps.
While they are not smooth, they are piecewise smooth (in fact, the
maps that we consider are piecewise linear). For those maps,
by~\cite{LaY}, there exists an absolutely continuous invariant
probability measure. By~\cite{LiY}, this measure is unique. Therefore
we can consider its metric entropy (which is also equal to its
Lyapunov exponent), as well as the topological entropy of the map.
Both entropies are positive, topological entropy is at most $\log 2$,
and by the Variational Principle, the metric entropy is not larger
than the topological entropy. We will show (Theorem~\ref{flex-uni})
that those are the only restrictions for the values of those
entropies.

In order to have a better control on the metric entropy, we will work
mainly with piecewise expanding skew tent maps, for which there are
only 2 different slopes. In particular, for the topologically mixing
expanding skew tent maps we prove a version of a theorem on
flexibility of entropies. For those maps, there is the additional
restriction that the topological entropy is larger than
$\frac12\log2$. Again, it turns out that there are no additional
restrictions (Theorem~\ref{flex-skew}).

There are two basic things that we have to prove in order to get
Theorem~\ref{flex-skew}. One is continuity of the density of the
absolutely continuous invariant probability measure as a function of a
map, and the other one is existence of maps with small metric entropy.
For this, we need strong estimates on the density of this measure.
Classical methods, initiated in~\cite{LaY}, using the variation and
integral, are difficult and give too weak estimates. In particular,
in~\cite{BK} continuity of the density as a function of the map is
proved only at maps for which the turning point is not periodic.

The problem with this classical approach is that it is difficult to
trace the trajectory of a density under the iterates of the
Frobenius-Perron operator. This is due to the fact that most points
have two preimages and the value of the image of the density at a
given point depends on the values (and derivatives, that are usually
different) at those preimages. However, for skew tent maps there is an
alternative to this procedure. Instead of looking at the whole
density, we look only at its jumps (discontinuities). Those jumps
propagate along one trajectory of the map, and it is easy to keep
track of them.

As the old saying goes, \emph{nihil sub sole novum},\footnote{Eccles.
  1:10 (Vulg.)} and this method has been employed by Ito, Tanaka and
Nakada~\cite{ITN} over 40 years ago. They obtained a simple formula
for the densities of absolutely continuous invariant measures for skew
tent maps. Their results are not as widely known as they deserve,
probably due to the fact that the term ``skew tent map'' was not used
at that time.\footnote{A search in the MathSciNet suggests that it was
  used for the first time in~\cite{MV}.}

Besides proving our main theorems about flexibility and theorems that
lead to them, we make an interesting observation. Namely, the formula
of Milnor and Thurston~\cite{MT}, connecting for unimodal maps the
kneading sequence to the topological entropy, can be reinterpreted
easily as the fact that for a tent map the sum of reciprocals of
derivatives of all iterates of the map at the critical value is zero.
It turns out that this is also true for skew tent maps. We wonder
whether this can be translated back into a language involving some
entropy-like quantities.

The paper is organized as follows. In Section~\ref{s-def} we give the
basic definitions. In Section~\ref{s-dens1} we prove that if the
positive slope of a mixing expanding skew tent map is large then the
density of the absolutely continuous invariant probability measure is
close to 1. In Section~\ref{s-dens} we prove continuous dependence of
this density on the map, while in Section~\ref{s-entr} we prove
continuous dependence of the metric entropy on the map. We do it for a
larger class of maps, namely, we do not assume mixing. In
Section~\ref{s-root} we modify the standard square root construction
(basically inverse of the renormalization process) in order to stay in
the class of piecewise expanding maps. Then, in Section~\ref{s-main}
we prove our main theorems, and in Section~\ref{s-obs} we make the
observation we mentioned.

\section{Definitions}\label{s-def}

An interval map $f:[0,1]\to[0,1]$ is called \emph{piecewise expanding}
if there is a finite partition of $[0,1]$ into smaller intervals, and
on the closure of each of those smaller intervals $f$ is of class
$C^2$ and $|f'|\ge T$ for some constant $T>1$. If $f$ is unimodal,
this partition can be finer than the partition into pieces of
monotonicity (laps).

For a unimodal map $f:[0,1]\to[0,1]$ we assume that $f$ is increasing
on the left lap and decreasing on the right one. If $c$ is the turning
(critical) point for $f$ then the \emph{core} of $f$ is the interval
$[f^2(c),f(c)]$.

A \emph{skew tent} map is a unimodal map which is linear (we will use
this term in the sense of ``affine''; this is a common terminology
in real analysis) on each lap (see Figure~\ref{skew-tent}).
There are three popular models for skew
tent maps. In the first one the map $f$ is defined on some interval
containing 0 in its interior, 0 is the turning point, and $f(0)=1$
(\cite{MV}). In
the second and third ones $f$ maps $[0,1]$ to itself. In the second
model, $f(0)=f(1)=0$ (\cite{BK}). In the third model, $[0,1]$ is the
core of $f$ (\cite{ITN}).
We will use the third model. Thus, in particular, we will have
$f(c)=1$ and $f(1)=0$. The slopes of $f$ will be denoted by $s$ (the
left slope) and $-t$ (the right slope). The condition that $f$ maps
$[0,1]$ to itself translates to the condition $\frac1s+\frac1t\ge 1$.

\begin{figure}
\begin{tikzpicture}
\draw (0,0) -- (0,5) -- (5,5) -- (5,0) -- (0,0) -- (5,5);
\draw[blue, very thick] (5,0) -- (1,5) -- (0,1);
\draw[dashed] (1.02,-0.1) -- (1.02,5);
\draw[dashed] (2.7777777,-0.1) -- (2.7777777, 2.8);
\node[below] at (0, 0) {\tiny $0$}; \node[left] at (0, 0) {\tiny $0$};
\node[below] at (5, 0) {\tiny $1$}; \node[left] at (0, 5) {\tiny $1$};
\node[below] at (1, 0) {\tiny $c=\tfrac{t-1}{t}$};
\node[below] at (2.7777777, 0) {\tiny $\tfrac{t}{t+1}$};
\node[right, rotate=-51.4] at (1.6,4.7) {\footnotesize slope $-t$};
\node[left,rotate=76] at (0.6,4.5) {\footnotesize slope $s$};
\end{tikzpicture}
\caption{A skew tent map.}\label{skew-tent}
\end{figure}
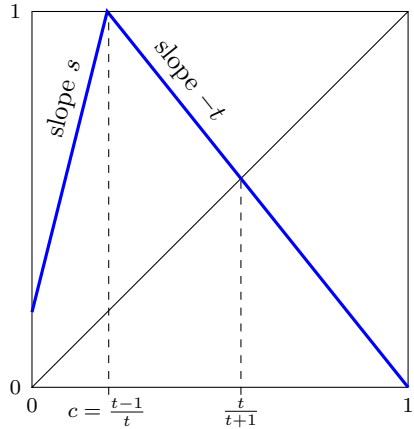

The two main spaces of skew tent maps that we consider are the space $\y$
of all piecewise expanding skew tent maps and its subspace $\x$
consisting of topologically mixing expanding skew tent maps.
Topological mixing is equivalent to each of two other conditions: that
the topological entropy is larger than $\frac12\log2$, and that $f(0)$
is to the left of the fixed point of $f$ (there is a unique fixed
point, except when $f(0)=0$, and then we mean the other fixed point).
In terms of slopes, each map of $\y$ is determined by slopes $s,t>1$,
subject to $\frac1s+\frac1t\ge 1$, while each map of $\x$ is
determined by slopes $s,t>1$, restricted to $t>\frac1s+\frac1t\ge 1$.
We consider the spaces $\y$ and $\x$ with the uniform (sup) topology.
Parametrization via the absolute values of the slopes gives the same
topology.

For computations it is good to remember that $s,t>1$, that $c=(t-1)/t$,
and that the fixed point is $t/(t+1)$ (see again Figure~\ref{skew-tent}).

By~\cite{LaY}, piecewise expanding maps have an absolutely continuous
invariant probability measure (\emph{acip} in short). By~\cite{LiY},
if the map is unimodal, this measure is unique. We will use the notation
$\mu$ for this measure and $\rho$ for its density. For positive
multiples of this measure (that is, absolutely continuous invariant
measures, that are finite, but not necessarily normed) we will use the
acronym \emph{acim}. Of course, by ``absolutely continuous'' we mean
absolutely continuous with respect to the Lebesgue measure $\lambda$.

We assume that the reader knows some basics (we really mean only
basics) of the kneading theory. If not, a quick look into~\cite{CE}
or~\cite{MT} can be useful.

\section{Density is close to $1$}\label{s-dens1}

The following theorem is proved in~\cite{ITN}
(see Figure~\ref{Densities} for some examples of
normalized densities obtained in this way).
\begin{theorem}\label{itn}
For $f\in\y$, the function
\begin{equation}\label{e1}
\rohat=\sum_{k=0}^\infty\frac1{(f^k)'(0)}\chi_{_{[f^k(0),1]}}
\end{equation}
is the density of an acim for $f$.
\end{theorem}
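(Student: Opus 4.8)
The plan is to verify that $\rohat$ is a fixed point of the Frobenius--Perron (transfer) operator $\mathcal{L}_f$, defined for $g\in L^1(\lambda)$ by $(\mathcal{L}_f g)(x)=\sum_{f(y)=x} g(y)/|f'(y)|$, because invariance of the measure $\rohat\,d\lambda$ is equivalent to $\mathcal{L}_f\rohat=\rohat$. First I would record that the series in~\eqref{e1} converges: since both slopes exceed $1$, one has $|(f^k)'(0)|\ge T^k$ with $T=\min(s,t)>1$, so the series converges absolutely and uniformly, $\rohat$ is bounded, lies in $L^1(\lambda)$, and $\mathcal{L}_f$ may be applied term by term.

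The heart of the argument is a single clean computation: the action of $\mathcal{L}_f$ on one step function $\chi_{[a,1]}$. Writing down the two inverse branches of the skew tent map explicitly (namely $y_L=(x-f(0))/s$ on $[0,c]$ and $y_R=c+(1-x)/t$ on $[c,1]$) and splitting according to whether $a\le c$ or $a\ge c$, I expect both cases to collapse to the uniform identity
\[
\mathcal{L}_f\chi_{[a,1]}=\frac{1}{f'(a)}\,\chi_{[f(a),1]}+\frac1t\,\chi_{[0,1]},
\]
valid $\lambda$-a.e.; the orientation reversal on the right lap is precisely what converts the factor $-1/t$ into $1/f'(a)$. Applying this to each term of~\eqref{e1} with $a=f^k(0)$ and using the chain rule $(f^k)'(0)\,f'(f^k(0))=(f^{k+1})'(0)$, the first terms telescope and I obtain
\[
\mathcal{L}_f\rohat=\rohat+\Bigl(\tfrac1t\textstyle\sum_{k=0}^\infty\tfrac{1}{(f^k)'(0)}-1\Bigr)\chi_{[0,1]}.
\]

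It then remains to show that the scalar $S:=\sum_{k=0}^\infty 1/(f^k)'(0)$ equals $t$. Rather than evaluate $S$ by hand, I would integrate the last display over $[0,1]$: since the Frobenius--Perron operator preserves the integral, $\int\mathcal{L}_f\rohat\,d\lambda=\int\rohat\,d\lambda$, which forces the coefficient $S/t-1$ to vanish. Hence $S=t$ and $\mathcal{L}_f\rohat=\rohat$, so $\rohat\,d\lambda$ is invariant; this normalization identity $S=t$ is exactly the reformulation of the Milnor--Thurston relation announced for Section~\ref{s-obs}. Finally, to upgrade this to a genuine acim one needs $\rohat\ge0$: since by~\cite{LaY,LiY} the acip is unique and ergodic with density positive a.e.\ on the core $[0,1]$, every $L^1$ fixed point of $\mathcal{L}_f$ is a scalar multiple of that density, and as $\rohat\neq0$ (it equals $S=t>0$ near $x=1$) the multiple is positive, so $\rohat$ is nonnegative.

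I expect the main obstacle to be bookkeeping rather than depth: getting the boundary contributions in the single-step identity exactly right (the onset of the left branch at $x=f(0)$, the domain endpoints $0$ and $1$, and the corner at $a=c$ where $\chi_{[f(a),1]}=\chi_{[1,1]}$ is $\lambda$-null), and checking that the exceptional parameters for which the orbit of $0$ meets $c$ influence only a $\lambda$-null set of points, so that all of the identities above hold a.e.\ as claimed.
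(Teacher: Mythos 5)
The paper does not actually prove Theorem~\ref{itn}; it is quoted from~\cite{ITN}, so your argument should be compared with the original Ito--Tanaka--Nakada proof rather than with anything in the text. Your route --- verifying the fixed-point equation $\mathcal{L}_f\rohat=\rohat$ for the transfer operator --- is a legitimate, essentially self-contained alternative, and its core is sound: the one-step identity $\mathcal{L}_f\chi_{[a,1]}=\frac{1}{f'(a)}\chi_{[f(a),1]}+\frac1t\chi_{[0,1]}$ holds $\lambda$-a.e.\ on both laps (for $a\le c$ one gets $\frac1s\chi_{[f(a),1]}+\frac1t\chi_{[0,1]}$ exactly, for $a>c$ one gets $\frac1t\chi_{[0,f(a)]}$, which is the same thing a.e.); term-by-term application is justified since the series~\eqref{e1} converges absolutely and $\mathcal{L}_f$ is an $L^1$-contraction; and deducing $S:=\sum_k 1/(f^k)'(0)=t$ from preservation of the integral is clean. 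Your worry about the orbit of $0$ hitting $c$ is indeed harmless: the paper's one-sided derivative convention is designed exactly so that the chain rule holds along the orbit, and the offending terms carry the null indicator $\chi_{[1,1]}$. A pleasant byproduct: since $(f^k)'(1)=-t\,(f^{k-1})'(0)$, your identity $S=t$ is precisely formula~\eqref{e6}, so your integral-preservation trick gives an independent proof of the observation in Section~\ref{s-obs}, which the paper instead extracts from ITN's $f$-expansion.

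Two details in your positivity step need repair. First, it is not true in general that $\rohat$ \emph{equals} $t$ near $x=1$: the orbit of $0$ may accumulate at $1$, or contain $1$ (when $0$ is periodic), so there need be no interval on which $\rohat$ is constant, and the left limit at $1$ is $t-\sum_{k:\,f^k(0)=1}1/(f^k)'(0)$, which equals $t$ only in the non-periodic case. It is cleaner to look at $0^+$: by uniform convergence, $\lim_{x\to0^+}\rohat(x)$ equals $1$ if $0$ is not periodic, and equals $D/(D-1)$ with $D=(f^p)'(0)$, $|D|>1$, if $0$ has period $p$; in every case this limit is positive, so $\rohat>0$ on a set of positive Lebesgue measure. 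Second, your parenthetical that the acip density is positive a.e.\ on the core $[0,1]$ is false for non-mixing $f\in\y$ (as the paper notes right after the theorem, the support of the acim is a finite union of intervals, and equals $[0,1]$ only for $f\in\x$); but you do not need that claim. Uniqueness of the acip~\cite{LiY} plus the standard lattice argument --- if $\mathcal{L}_f g=g$ then $\mathcal{L}_f|g|\ge|\mathcal{L}_f g|=|g|$, and preservation of the integral forces $\mathcal{L}_f|g|=|g|$, so $g^{+}$ and $g^{-}$ are nonnegative fixed points, hence multiples of the acip density --- shows every $L^1$ fixed point of $\mathcal{L}_f$ is a scalar multiple of that density; combined with positivity of $\rohat$ on a positive-measure set, the scalar is positive and $\rohat$ is indeed the density of an acim.
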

By~\cite{K}, $\rohat$ is bounded below by a positive constant on the
support of the acim, which is the union of finitely many intervals.
For $f\in\x$, this support is just $[0,1]$.

We have to explain how to understand the derivatives in~\eqref{e1} if
0 is periodic. In that case we take the limit of $(f^k)'(x)$ as $x$
goes to 0 from the right. If the period of $0$ is $p$, then we look at
the map $f^{p-2}$ in a small neighborhood of $0$ (observe that
$f^{p-2}(0)=c$). If this map preserves orientation, then we should
interpret $f'(c)$ as $-t$; if it reverses orientation, then we should
interpret $f'(c)$ as $s$.

Similarly, we define the kneading sequence as the itinerary of $1$,
using symbols $L,R$ (but not $C$), but if $1$ is periodic, then we
take the limit of the itineraries of $x$ as $x$ goes to 1 from the
left. This is consistent with our treatment of the derivative at $c$,
and this is how the kneading sequence is treated in~\cite{ITN} (except
that the authors use there $0,1$ instead of $L,R$).

Let us estimate the variation of $\rohat$ for $f\in\x$. Since $f(0)$
is to the left of the fixed point of $f$, there exists $m\ge 0$ such
that the kneading sequence of $f$ is $RLR^mL\dots$. We will denote the
space of all elements of $\x$ with the kneading sequence
$RLR^mL\dots$, where $m\le n$, by $\x_n$. Then $\x$ is the union of
the ascending sequence of subsets $\x_n$.

\begin{lemma}\label{varrohat}
If $f\in\x_n$ and $s\ge t$ then
\begin{equation}\label{e4}
\var(\rohat)\le\frac{n+1}s+\frac{2s+1}{s^2}.
\end{equation}
\end{lemma}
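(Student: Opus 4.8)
The plan is to reduce the computation of $\var(\rohat)$ to a convergent sum of jump sizes, and then to estimate that sum from the kneading data, using the two hypotheses $m\le n$ and $s\ge t$ at two clearly separated points.

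First I would note that $\rohat$, being the sum in~\eqref{e1} of the functions $\chi_{_{[f^k(0),1]}}$, is a step function whose only discontinuities lie at the points $f^k(0)$, the jump at $f^k(0)$ having size $D_k:=\abs{1/(f^k)'(0)}$. The term $k=0$ gives the constant $\chi_{_{[0,1]}}$ and contributes nothing. Since $\abs{f'}\ge t>1$ forces $D_k\le t^{-k}$, the series converges absolutely, and subadditivity of the total variation yields $\var(\rohat)\le\sum_{k\ge 1}D_k$. It therefore suffices to bound this sum. Writing the itinerary of $0$ as the shift $LR^mL\cdots$ of the kneading sequence $RLR^mL\cdots$, I would record that $D_k=s^{-\ell_k}t^{-r_k}$, where $\ell_k$ and $r_k$ count the symbols $L$ and $R$ among the first $k$ entries of this itinerary; the one-sided-limit conventions fixed before the lemma make this valid even in the degenerate cases where $0$ or $1$ is periodic.

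Next I would decompose the orbit of $0$ into blocks delimited by its returns to $L$, which occur infinitely often because the $R$-lap is expanding and its fixed point is repelling. Letting $0=j_0<j_1<\cdots$ be the visit times to $L$, with $j_1=m+1$, and writing $b_i=j_{i+1}-j_i-1$ for the length of the $i$-th run of $R$'s and $a_i:=D_{j_i}$, one gets $a_0=1$, $b_0=m$, and the recursion $a_{i+1}=a_i/(s\,t^{b_i})$, so in particular $a_i\le s^{-i}$. The $i$-th block then contributes $\frac{a_i}{s}\sum_{l=0}^{b_i}t^{-l}$ to the sum. The first block is handled crudely: using $t^{-l}\le 1$,
\[
\sum_{k=1}^{m+1}D_k=\frac1s\sum_{l=0}^{m}t^{-l}\le\frac{m+1}{s}\le\frac{n+1}{s},
\]
which is precisely the first term of~\eqref{e4}; note this is the only place where $m\le n$ is used.

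The crux is the tail $\sum_{i\ge 1}$. Here I would sum the geometric series exactly and use $a_i\,t^{-b_i}=s\,a_{i+1}$ to rewrite the $i$-th contribution as $\frac{t\,a_i}{s(t-1)}-\frac{a_{i+1}}{t-1}$; adding over $i\ge 1$ telescopes to
\[
\frac{t\,a_1}{s(t-1)}+\frac{t-s}{s(t-1)}\sum_{i\ge 2}a_i.
\]
The hypothesis $s\ge t$ is decisive at exactly this point: it makes the second summand non-positive, so the tail is at most $\frac{t\,a_1}{s(t-1)}=\frac{t^{1-m}}{s^2(t-1)}$. I would finish by converting $1/(t-1)$ into a multiple of $s$ via mixing: from $t>\frac1s+\frac1t$ one obtains $s(t-1)(t+1)>t$, hence $\frac1{t-1}<s\bigl(1+\tfrac1t\bigr)<2s$. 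Treating $m=0$ and $m\ge 1$ separately to absorb the factor $t^{1-m}$, this bounds the tail by $\frac{2s+1}{s^2}$, and~\eqref{e4} follows. The step I expect to demand the most care is this tail estimate: a naive geometric bound on each $R$-run produces a factor $t/(t-1)$ that blows up as $t\to 1$, and it is precisely the cancellation forced by $s\ge t$ in the telescoped sum, together with the mixing inequality $\frac1{t-1}<2s$, that keeps the bound uniform; I would also need to verify that the periodic cases go through unchanged under the stated conventions.
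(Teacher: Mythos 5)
Your proof has the same skeleton as the paper's: both bound $\var(\rohat)$ by $\sum_{k\ge1}1/\abs{(f^k)'(0)}$, both use $m\le n$ only to bound the first $m+1$ terms by $1/s$ each, and both convert $1/(t-1)$ into $O(s)$ from the mixing condition $st^2-s-t>0$ (the paper solves for $t$ to get $t>1+\frac1{2s}$, which is equivalent to your $s(t-1)(t+1)>t$). The only real divergence is the tail, and there the paper is much more direct: since $s\ge t$, every linear factor in $(f^k)'(0)$ beyond the two $s$'s (at times $0$ and $m+1$) is $\ge t$ in absolute value, so $\abs{(f^k)'(0)}\ge s^2t^{k-2}$ for $k\ge m+2$, and summing the geometric series gives a tail bound $\frac{t^{1-m}}{s^2(t-1)}$ --- exactly the quantity your block decomposition, exact summation and telescoping produce. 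So your telescoping (which I checked: the recursion $a_{i+1}=a_i/(st^{b_i})$, the block sums, and the sign of $\frac{t-s}{s(t-1)}\sum_{i\ge2}a_i$ are all correct) is heavier machinery for the same estimate; its one merit is that it is an identity rather than a term-by-term inequality. Your closing claim that a naive per-block bound fails is also overstated: using $\frac{a_i}{s}\cdot\frac{t}{t-1}$ with $a_i\le s^{-i}$ gives a tail at most $\frac{t}{s(s-1)(t-1)}<\frac{2s+1}{s(s-1)}$, which misses \eqref{e4} only by the factor $\frac{s}{s-1}$ and would still suffice for the application in Theorem~\ref{varrho}.

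There is, however, one false supporting claim: the returns of the orbit of $0$ to the lap $L$ need \emph{not} occur infinitely often. Expansion and repulsion only show that an orbit which misses the fixed point eventually leaves $R$; the orbit of $0$ can land exactly on the fixed point $t/(t+1)$ and then stay in $R$ forever. This happens within the hypotheses of the lemma: for the tent map with $s=t$ equal to the root of $s^3-2s-2=0$ (approximately $1.7693$), one checks $f^2(0)=t/(t+1)$, so the itinerary of $0$ is $LLR^\infty$, and this map lies in $\x_0$ with $s\ge t$. For such maps your sequence of blocks is finite and your formulas, as written, do not apply. The repair is easy and stays inside your own computation: the definition of $\x_n$ still guarantees the first return $j_1=m+1$; if the returns stop at $j_N$, the last block is an infinite geometric series summing to exactly $\frac{ta_N}{s(t-1)}$, which is your telescoped expression with $a_{N+1}=0$, so the bound $\frac{ta_1}{s(t-1)}$ and everything after it survive verbatim. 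But you should state this two-case version instead of the (incorrect) blanket assertion of infinitely many returns.
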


\begin{proof}
Clearly, $\var(\rohat)$ is equal to the sum of expressions
$1/|(f^k)'(0)|$ over those $k$ for which $f^k(x)\in(0,1)$. Therefore,
\begin{equation}\label{e2}
\var(\rohat)\le\sum_{k=1}^\infty\frac1{|(f^k)'(0)|}.
\end{equation}
This series converges because the map $f$ is piecewise expanding.

If $f\in\x_n$, then we have $|(f^k)'(0)|\ge s$ for $k=0,1,\dots,n$ and
$|(f^k)'(0)|\ge s^2t^{k-n-1}$ for $k>n$, so
\begin{equation}\label{e3}
\var(\rohat)\le\frac{n+1}s+\frac1{s^2}\left(1+\frac1t+\frac1{t^2}+
\dots\right)=\frac{n+1}s+\frac1{s^2}\cdot\frac{t}{t-1}.
\end{equation}

It is easy to calculate that $f(0)$ being to the left of the fixed
point of $f$ is equivalent to $st^2-s-t\ge 0$. Solving this inequality
for $t$ we get
\[
t>\frac{1+\sqrt{1+4s^2}}{2s}\ge 1+\frac1{2s}.
\]
Since the function $t\mapsto\frac{t}{t-1}$ is decreasing, we get
\[
\frac{t}{t-1}<\frac{1+\frac1{2s}}{\frac1{2s}}=2s+1.
\]
Therefore, for $f\in\x_n$ we get the estimate~\eqref{e4}.
\end{proof}

Now we normalize $\rohat$, that is, we set $\rho=\rohat/\int\rohat$.
Then $\rho$ is the density of the acip for $f$. Remember that
by~\cite{LiY}, the acip is unique.

\begin{theorem}\label{varrho}
For every $n$ and $\eps>0$ there exists $s(\eps,n)$ such that if
$f\in\x_n$ and $s\ge s(\eps,n)$ then $1-\eps\le\rho\le 1+\eps$ and
$\var(\rho)\le\eps$.
\end{theorem}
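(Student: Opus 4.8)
The plan is to exploit the fact that the $k=0$ term of the series \eqref{e1} is simply the constant function $1$ on all of $[0,1]$, since $f^0(0)=0$ and $(f^0)'(0)=1$. Writing
\[
\rohat-1=\sum_{k=1}^\infty\frac1{(f^k)'(0)}\chi_{_{[f^k(0),1]}},
\]
I would show that the remaining tail is uniformly small when $s$ is large, so that $\rohat$ is close to the constant $1$ in the supremum norm; normalizing then changes almost nothing.

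Before invoking Lemma~\ref{varrohat} I would check that its hypothesis $s\ge t$ is automatic for large $s$. Indeed every $f\in\x_n\subset\y$ satisfies $\frac1s+\frac1t\ge1$, so $\frac1t\ge1-\frac1s$ and hence $t\le\frac{s}{s-1}<2$ once $s\ge2$; thus $s\ge t$ holds whenever $s$ is large. The key observation is then that the very estimate bounding $\var(\rohat)$ also bounds the supremum norm of $\rohat-1$: from the displayed series,
\[
\|\rohat-1\|_\infty\le\sum_{k=1}^\infty\frac1{\abs{(f^k)'(0)}},
\]
which is precisely the right-hand side of \eqref{e2}, bounded by $B:=\frac{n+1}s+\frac{2s+1}{s^2}$ via the computation \eqref{e3}--\eqref{e4} in the proof of Lemma~\ref{varrohat}. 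Therefore both $\var(\rohat)\le B$ and $\|\rohat-1\|_\infty\le B$, and for fixed $n$ we have $B\to0$ as $s\to\infty$.

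From $\|\rohat-1\|_\infty\le B$ on $[0,1]$, which has Lebesgue measure $1$, I get $1-B\le\int\rohat\le1+B$. Assuming $s$ is large enough that $B<1$, the normalized density $\rho=\rohat/\int\rohat$ then satisfies
\[
\frac{1-B}{1+B}\le\rho\le\frac{1+B}{1-B},
\qquad
\var(\rho)=\frac{\var(\rohat)}{\int\rohat}\le\frac{B}{1-B},
\]
using that the variation is homogeneous under multiplication by the positive constant $1/\!\int\rohat$. As $B\to0$ each of these three bounds tends to its target value ($1$, $1$, and $0$, respectively). Finally I would choose $s(\eps,n)$ so large that $B=B(s,n)$ is small enough to force $\frac{1+B}{1-B}\le1+\eps$, $\frac{1-B}{1+B}\ge1-\eps$, and $\frac{B}{1-B}\le\eps$ simultaneously for all $s\ge s(\eps,n)$; this is possible precisely because $B(s,n)\to0$.

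There is no deep obstacle here: the whole argument rests on recognizing that the leading term of \eqref{e1} is the constant $1$ and that Lemma~\ref{varrohat} controls not only the variation but also the sup-norm deviation of $\rohat$ from $1$. The only point requiring a little care is tracking the normalization constant $\int\rohat$, which is what converts the additive estimate $\|\rohat-1\|_\infty\le B$ into the multiplicative bounds on $\rho$ and $\var(\rho)$.
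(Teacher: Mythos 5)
Your proof is correct, and its skeleton---control $\rohat$ via the estimate in Lemma~\ref{varrohat}, then normalize---matches the paper's; but your route to the pointwise bound is genuinely different and arguably cleaner. The paper anchors at the single point $0$: it proves $|\rohat(0)-1|\le \frac1{s-1}$ (formula~\eqref{e5}), which requires splitting into cases according to whether $0$ is periodic (and invoking the convention for derivatives along a periodic critical orbit), and then uses the variation bound~\eqref{e4} to spread this closeness to $1$ over the whole interval. You instead observe that the $k=0$ term of~\eqref{e1} is the constant function $1$, so $\|\rohat-1\|_\infty$ is bounded by the tail sum on the right-hand side of~\eqref{e2}, hence by the same quantity $B=\frac{n+1}s+\frac{2s+1}{s^2}$ that bounds $\var(\rohat)$; this makes~\eqref{e5} and the periodicity case analysis unnecessary. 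Two further points in your favour: you verify the hypothesis $s\ge t$ of Lemma~\ref{varrohat} (from $\frac1s+\frac1t\ge1$ one gets $t\le\frac{s}{s-1}\le 2\le s$ once $s\ge2$), which the paper uses tacitly, and you write out the normalization arithmetic explicitly, namely $\frac{1-B}{1+B}\le\rho\le\frac{1+B}{1-B}$ and $\var(\rho)=\var(\rohat)/\!\int\rohat\le\frac{B}{1-B}$, which the paper compresses into a one-sentence limit statement. The trade-off is minimal: the paper's argument generalizes to situations where one only knows the value of the density at one point plus its variation, whereas yours exploits the explicit series form of $\rohat$; since that series is available here, your version is the more economical of the two.
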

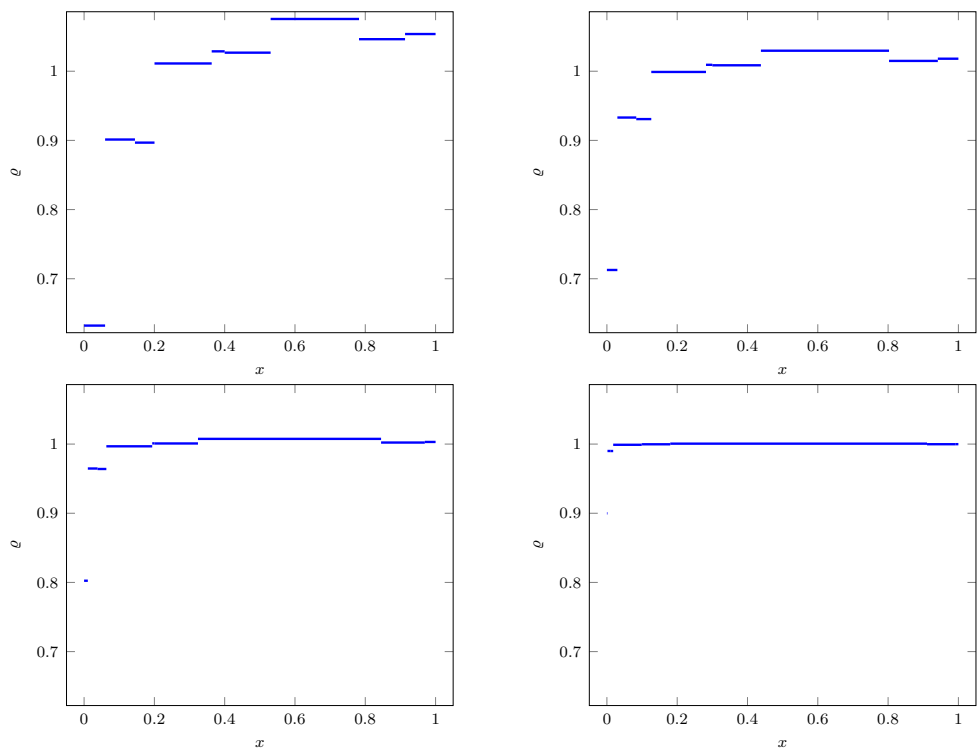
\begin{figure}[t]
\begin{tikzpicture}[scale=0.75, font={\tiny}]
\begin{axis}[xlabel={$x$}, xmin=-0.05, xmax=1.05,
             ylabel={$\rho$}, ymin=0.622, ymax=1.086]
\draw[blue, very thick] (0,0.632258) -- (0.059794,0.632258);
\draw[blue, very thick] (0.059794,0.901245) -- (0.144737,0.901245);
\draw[blue, very thick] (0.144737,0.896771) -- (0.200341,0.896771);
\draw[blue, very thick] (0.200341,1.011208) -- (0.363054,1.011208);
\draw[blue, very thick] (0.363054,1.028735) -- (0.400000,1.028735);
\draw[blue, very thick] (0.400000,1.026832) -- (0.530699,1.026832);
\draw[blue, very thick] (0.530699,1.075518) -- (0.782168,1.075518);
\draw[blue, very thick] (0.782168,1.046306) -- (0.913158,1.046306);
\draw[blue, very thick] (0.913158,1.053763) -- (1,1.053763);
\end{axis}
\end{tikzpicture}\hspace*{2em}
\begin{tikzpicture}[scale=0.75, font={\tiny}]
\begin{axis}[xlabel={$x$}, xmin=-0.05, xmax=1.05,
             ylabel={$\rho$}, ymin=0.622, ymax=1.086]
\draw[blue, very thick] (0,0.712664) -- (0.029821,0.712664);
\draw[blue, very thick] (0.029821,0.933035) -- (0.083545,0.933035);
\draw[blue, very thick] (0.083545,0.930800) -- (0.126261,0.930800);
\draw[blue, very thick] (0.126261,0.998943) -- (0.281916,0.998943);
\draw[blue, very thick] (0.281916,1.009268) -- (0.300000,1.009268);
\draw[blue, very thick] (0.300000,1.008577) -- (0.438139,1.008577);
\draw[blue, very thick] (0.438139,1.029649) -- (0.802659,1.029649);
\draw[blue, very thick] (0.802659,1.014899) -- (0.941518,1.014899);
\draw[blue, very thick] (0.941518,1.018091) -- (1,1.018091);
\end{axis}
\end{tikzpicture}

\begin{tikzpicture}[scale=0.75, font={\tiny}]
\begin{axis}[xlabel={$x$}, xmin=-0.05, xmax=1.05,
             ylabel={$\rho$}, ymin=0.622, ymax=1.086]
\draw[blue, very thick] (0,0.802458) -- (0.010653,0.802458);
\draw[blue, very thick] (0.010653,0.964678) -- (0.038277,0.964678);
\draw[blue, very thick] (0.038277,0.963992) -- (0.063352,0.963992);
\draw[blue, very thick] (0.063352,0.996785) -- (0.193810,0.996785);
\draw[blue, very thick] (0.193810,1.001028) -- (0.200000,1.001028);
\draw[blue, very thick] (0.200000,1.000889) -- (0.324038,1.000889);
\draw[blue, very thick] (0.324038,1.007518) -- (0.844952,1.007518);
\draw[blue, very thick] (0.844952,1.002215) -- (0.969378,1.002215);
\draw[blue, very thick] (0.969378,1.003072) -- (1,1.003072);
\end{axis}
\end{tikzpicture}\hspace*{2em}
\begin{tikzpicture}[scale=0.75, font={\tiny}]
\begin{axis}[xlabel={$x$}, xmin=-0.05, xmax=1.05,
             ylabel={$\rho$}, ymin=0.622, ymax=1.086]
\draw[blue, very thick] (0,0.899887) -- (0.001629,0.899887);
\draw[blue, very thick] (0.001629,0.990022) -- (0.009853,0.990022);
\draw[blue, very thick] (0.009853,0.989956) -- (0.017894,0.989956);
\draw[blue, very thick] (0.017894,0.998984) -- (0.099112,0.998984);
\draw[blue, very thick] (0.099112,0.999717) -- (0.100000,0.999717);
\draw[blue, very thick] (0.100000,0.999710) -- (0.180281,0.999710);
\draw[blue, very thick] (0.180281,1.000615) -- (0.910799,1.000615);
\draw[blue, very thick] (0.910799,0.999801) -- (0.991132,0.999801);
\draw[blue, very thick] (0.991132,0.999874) -- (1,0.999874);
\end{axis}
\end{tikzpicture}
\caption{The graphs of the density $\rho$ for:\newline
\hspace*{2em}top left: $s = 2.35051\ldots$ and $t = \tfrac{10}{6};$\newline
\hspace*{2em}top right: $s = 3.2339\ldots$ and $t = \tfrac{10}{7};$\newline
\hspace*{2em}bottom left: $s = 4.9467\ldots$ and $t = \tfrac{10}{8};$\newline
\hspace*{2em}bottom right: $s = 9.9837\ldots$ and $t = \tfrac{10}{9}$.}\label{Densities}
\end{figure}

To illustrate Theorem~\ref{varrho} we show in Figure~\ref{Densities} four
densities for maps from $\x_0$ with larger and larger slope $s$.

\begin{proof}[Proof of Theorem~\ref{varrho}]
Let us start by estimating the value of $\rohat(0)$. If 0 is periodic
of period $p$, then
\begin{equation}\label{e5}
|\rohat(0)-1|\le\sum_{i=1}^\infty\frac1{|(f^{ip})'(0)|}\le
\sum_{i=1}^\infty\frac1{s^i}=\frac1{s-1}.
\end{equation}
If 0 is not periodic, then $\rohat(0)=1$, so~\eqref{e5} also holds.

Now, the existence of the required $s(\eps,n)$ follows from the fact
that the limit as $s\to\infty$ of the right hand side of
both~\eqref{e4} and~\eqref{e5} is 0.
\end{proof}

We want to justify the assumption that $f\in \x_n$ in Theorem~\ref{varrho}.
For this we show that if we move through different classes $\x_n$ then the
limit density can be completely different (see Figure~\ref{hiperbolafig}).

\begin{figure}[hb]
\hfill
\begin{tikzpicture}[scale=0.9]
\begin{axis}[xlabel={$x$}, xmin=0, xmax=1,
             ylabel={$\varrho$}, ymin=0.77, ymax=1.33,
             enlargelimits=0.025]
\addplot[blue, very thick, domain=0:0.4, samples=1000] {0.6570787563 + 0.0657078756/(0.5-x)};
\addplot[blue, very thick, domain=0.6:1, samples=1000] {0.6570787563 + 0.0657078756/(x-0.5)};
\draw[blue, very thick] (0.3975, 1.3141575126) -- (0.6025, 1.3141575126);
\end{axis}
\end{tikzpicture}
\hfill
\begin{tikzpicture}[scale=0.9]
\begin{axis}[xlabel={$x$}, xmin=0, xmax=1,
             ylabel={$\varrho$}, ymin=0.9, ymax=1.85,
             enlargelimits=0.025]
\addplot[blue, very thick, domain=0:0.49, samples=1000] {0.9105474040 + 0.0091054740/(0.5-x)};
\addplot[blue, very thick, domain=0.51:1, samples=1000] {0.9105474040 + 0.0091054740/(x-0.5)};
\draw[blue, very thick] (0.48725, 1.8210948081) -- (0.5125, 1.8210948081);
\end{axis}
\end{tikzpicture}
\hfill\strut
\caption{Two limit densities for
$f(0) = a = 0.4$ (left picture) and
$f(0) = a = 0.49$ (right picture).}\label{hiperbolafig}
\end{figure}
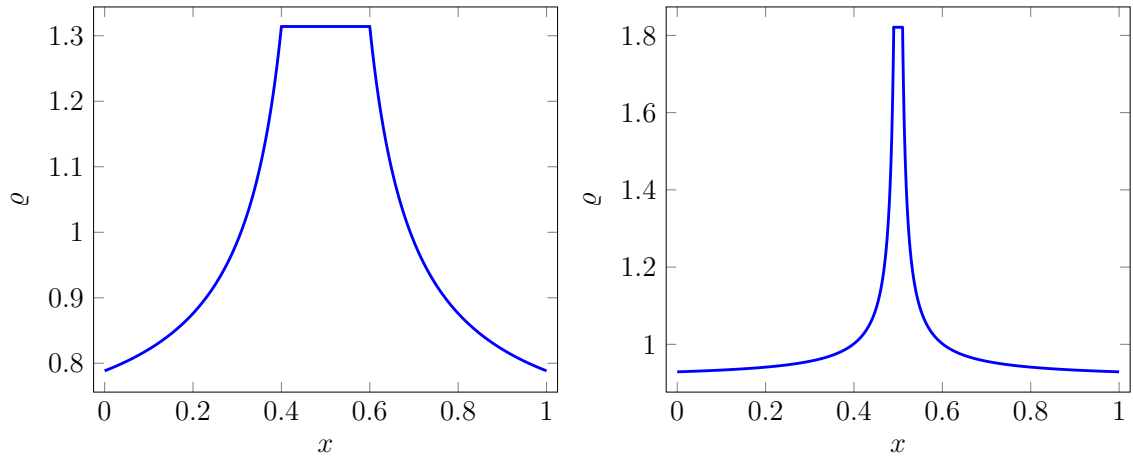

Although we agreed to work with the version of the kneading theory
that does not use the symbol $C$, when working with maps for which 0
is periodic, it is sometimes convenient to use $C$. Let us fix
$a\in(0,1/2)$ and take the skew tent map $f$ with $f(0)=a$ and the
kneading sequence $RLR^{2n}C$ (then the orbit of the turning point is
the \v{S}tefan periodic orbit of period $2n+3$). For $k=0,1,\dots,2n$,
set
\begin{equation}\label{h1}
x_k=f^k(a)=\frac{t}{t+1}+\left(a-\frac{t}{t+1}\right)(-t)^k.
\end{equation}
Let $\roov$ be the density of the acim, normalized by $\roov(0)=1$
(so, since the only preimage of 0 is 1, and $\roov$ is invariant
under the Frobenius-Perron operator, we have $\roov(1)=t$). The
function $\roov$ is constant on $[a,f(a)]$ and has jumps
\[
\frac1{s(-t)^k}=\frac{t-1}{(1-a)(-1)^kt^{k+1}}
\]
at $x_k$, since
\[
s=\frac{1-a}c=\frac{1-a}{1-\frac1t}=\frac{t(1-a)}{t-1}.
\]

Let us concentrate on the right part of the interval (the situation on
the left one is very similar). If $\pov$ is the value of $\roov$ on
$[a,f(a)]$, then the value of $\roov$ at $x_{2k+1}$
(more precisely, the limit from the left) is
\[\begin{split}
\roov(x_{2k+1})&=\pov-\sum_{j=0}^{k-1}\frac{t-1}{(1-a)t^{2j+2}}\\
&=\pov-\frac{t-1}{(1-a)t^2}\cdot\frac{1-1/t^{2k}}{1-1/t^2}=
\pov-\frac1{(1-a)(t+1)}\left(1-\frac1{t^{2k}}\right).
\end{split}\]
By~\eqref{h1},
\[
\frac1{t^{2k}}=\frac{t\left(\frac{t}{t+1}-a\right)}
{x_{2k+1}-\frac{t}{t+1}},
\]
so
\[
\roov(x_{2k+1})=\pov-\frac1{(1-a)(t+1)}\left(1-\frac{t
\left(\frac{t}{t+1}-a\right)}{x_{2k+1}-\frac{t}{t+1}}\right).
\]

Since $x_{2n+1}=1$, we have
\[
\pov=t+\frac1{(1-a)(t+1)}\left(1-\frac{t
\left(\frac{t}{t+1}-a\right)}{1-\frac{t}{t+1}}\right)=\frac1{1-a}.
\]

Now let us go with $n$ to infinity, keeping $a$ constant. Then $t$
goes to 1, $f(a)$ goes to $1-a$, and for every $\eps>0$ the points
$x_{2k+1}$ are $\eps$-dense in $[1-a,1]$ if $n$ is large enough.
Therefore, $\roov$ goes to the limit $\rotil$, and for
$x\ge 1-a$ we have
\[
\rotil(x)=\frac1{1-a}-\frac1{2(1-a)}\left(1-\frac{\frac12-a}
{x-\frac12}\right)=\frac1{2(1-a)}+\frac{1-2a}{4(1-a)}\cdot
\frac1{x-\frac12}.
\]
Very similar computations give us
\[
\rotil(x)=\frac1{2(1-a)}+\frac{1-2a}{4(1-a)}\cdot
\frac1{\frac12-x}
\]
for $x\le a$.

In such a way we get
\begin{equation}\label{h2}
\rotil(x)=\begin{cases}
\frac1{2(1-a)}+\frac{1-2a}{4(1-a)}\cdot\frac1{\frac12-x}&
\text{if $x<a$,}\\
\frac1{1-a}& \text{if $a\le x\le 1-a$,}\\
\frac1{2(1-a)}+\frac{1-2a}{4(1-a)}\cdot\frac1{x-\frac12}&
\text{if $x>1-a$,}
\end{cases}\end{equation}
so in particular, $\rotil$ is symmetric with respect to $1/2$.

Now we have to normalize $\rotil$ (that is, to divide it by its
integral) in order to get the limit density $\rho$. We have
\[\begin{split}
\int\rotil(x)\;dx&=2\int_0^a\left(\frac1{2(1-a)}+\frac{1-2a}
{4(1-a)}\cdot\frac1{\frac12-x}\right)\;dx+\frac{1-2a}{1-a}\\
&=\frac{a}{1-a}+\frac{1-2a}{1-a}+\frac{1-2a}{2(1-a)}
\int_0^a\frac1{\frac12-x}\;dx\\
&=1-\frac{1-2a}{2-2a}\log\left(\frac12-x\right)\bigg|_0^a
=1-\frac{1-2a}{2-2a}\log(1-2a).
\end{split}\]

Thus the limit density $\rho$ of the acip is given by the function
from formula~\eqref{h2} divided by $1-(1-2a)\log(1-2a)/(2-2a)$.

Observe that as $a$ goes to $1/2$ then the maximal value of $\rho$ (at
the plateau) increases to 2. This motivates us to make the following
conjecture, with weaker and stronger versions.

\begin{conjecture}\label{conj-w}
There exists a constant $K$ such that for every $f\in\x$ the density
of the acip for $f$ is bounded above by $K$.
\end{conjecture}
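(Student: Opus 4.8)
The plan is to deduce the conjecture from a uniform bound on the total variation of $\rho$. Since $\rho\ge0$ and $\int_0^1\rho=1$ on an interval of length one, the average value is $1$, so $\min\rho\le1$ and hence
\[
\max\rho\le\min\rho+\var(\rho)\le1+\var(\rho).
\]
It therefore suffices to produce a universal constant $C$ with $\var(\rho)\le C$ for every $f\in\x$, and then $K=1+C$ works. Writing $\rho=\rohat/\int\rohat$, variation scales under division by a positive constant, so this is exactly the scale-invariant comparison $\var(\rohat)\le C\int\rohat$. The whole problem thus becomes: show that the total size of the jumps of $\rohat$ is controlled by its integral, uniformly over $\x$, where $\var(\rohat)=\sum_{k\ge1}1/|(f^k)'(0)|$ (over $k$ with $f^k(0)\in(0,1)$) as in the proof of Lemma~\ref{varrohat}, while $\int\rohat=\sum_{k\ge0}(1-f^k(0))/(f^k)'(0)$ carries signs.

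First I would dispose of the easy parts of the parameter space. By the continuous dependence of the density on the map proved in Section~\ref{s-dens}, the quantity $\var(\rho)$ is a continuous function of the slopes, hence bounded on every compact subset of the region defining $\x$; only the non-compact ends require work. Along the end where $s\to\infty$ while the kneading prefix stays bounded (i.e.\ $f$ remains in a fixed $\x_n$), Theorem~\ref{varrho} already gives $\var(\rho)\le\eps$, so there is nothing to prove. The delicate regime is the one in which $s\to\infty$ and the kneading length grows with it, so that $f$ runs through $\x_n$ with $n\to\infty$; this is precisely the regime realized by the limit densities $\rotil$ computed after Theorem~\ref{varrho}, where the peak of $\rho$ tends to $2$. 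A symmetric discussion, interchanging the roles of the two slopes, is meant to cover the end $t\to\infty$.

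In the delicate regime I would work directly with the expansion~\eqref{e1}. The signed jumps of $\rohat$ interleave along the orbit of $0$ and partially cancel, so that $\rohat$ stays in a bounded band while its integral remains a definite fraction of $\var(\rohat)$. Concretely I would track the orbit through its closed form (as in~\eqref{h1}), group the iterates into the geometric blocks between successive visits of $f^k(0)$ to each lap, and estimate each block's contribution to $\var(\rohat)$ and to $\int\rohat$ separately. Combined with Lemma~\ref{varrohat}, whose bound $\frac{n+1}{s}+\frac{2s+1}{s^2}$ becomes useful once one knows that the combinatorial length satisfies $n=O(s)$, this should yield the comparison $\var(\rohat)\le C\int\rohat$ with a constant $C$ that one can read off from the limiting family; the computation producing~\eqref{h2} suggests $C$, and hence $K$, near $2$.

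The main obstacle is exactly the uniform lower bound $\int\rohat\ge c\,\var(\rohat)$ in this high-complexity regime: one must show that the cancellations among the signed terms of $\int\rohat$ never conspire to make the integral small relative to the total jump size, uniformly over the unbounded union $\bigcup_n\x_n$. This in turn needs a uniform relation between the length $n$ and the slope $s$ — morally, that the orbit of $0$ cannot linger near the turning point for more than $O(s)$ steps — together with control of where the positive and the negative jumps are located. It is the failure of each elementary tool (the variation estimate of Lemma~\ref{varrohat}, the Frobenius--Perron fixed-point equation, and Theorem~\ref{varrho}) to be uniform as $s\to\infty$ and $n\to\infty$ simultaneously that keeps the statement at the level of a conjecture.
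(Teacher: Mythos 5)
You are attempting to prove Conjecture~\ref{conj-w}, which the paper does not prove: it is stated as an open problem, supported only by the computation of the limit densities $\rotil$ in~\eqref{h2}, whose maxima tend to $2$ as $a\to 1/2$. So there is no proof in the paper to compare with, and the only question is whether your argument settles the conjecture. It does not, and you say so yourself. Your reduction $\max\rho_f\le 1+\var(\rho_f)$ is correct (since $\int\rho_f\,d\lambda=1$ on an interval of length $1$ forces $\inf\rho_f\le 1$), but note that it replaces the conjecture by an a priori \emph{stronger} statement, since jumps of opposite signs can cancel in the sup but not in the variation. After that, everything hinges on the uniform comparison $\var(\rohat_f)\le C\int\rohat_f$ over all of $\x$, and you explicitly concede that in the regime where $s\to\infty$ and the kneading length $n\to\infty$ simultaneously this is exactly what you cannot establish. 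That regime is not a technical leftover: it is where the extremal examples (the \v{S}tefan families producing $\rotil$) live, so the missing step is the entire content of the conjecture. What you have is a research plan, not a proof.

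Beyond the openly admitted gap, several intermediate claims are wrong. (a) Theorem~\ref{cont-dens} gives continuity of $g\mapsto\rho_g$ in $L^1$ only; total variation is merely lower semicontinuous under $L^1$ convergence, so ``$\var(\rho_g)$ is a continuous function of the slopes'' does not follow, and your compactness step is unjustified as stated (it can be repaired via~\eqref{e2}, which bounds $\var(\rohat_g)$ by $1/(T-1)$ when all slopes exceed $T$, together with continuity and positivity of $\int\rohat_g$, but that is a different argument). (b) The class $\x$ is not symmetric in $s$ and $t$: the left lap is increasing by convention and no conjugacy within the class interchanges the slopes, so the end $t\to\infty$ (which forces $s\to1$) cannot be handled ``symmetrically''; there is no analogue of Theorem~\ref{varrho} or Lemma~\ref{varrohat} with the roles swapped, and the mechanism keeping $\rho$ bounded there (many jumps of the same sign and of size comparable to $1$, compensated by a large integral) is entirely different. (c) The hoped-for uniform relation $n=O(s)$ is false: for the \v{S}tefan families with kneading $RLR^{2n}C$, formula~\eqref{h1} gives $s\approx(1-a)/(t-1)$ while $2n+1\approx-\log(1-2a)/(t-1)$, hence $n/s\approx-\log(1-2a)/(2(1-a))$, which tends to infinity as $a\to1/2$ --- precisely the extremal region. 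Worse, even where $n/s$ is bounded, Lemma~\ref{varrohat} is too lossy to yield your comparison: it bounds each of the first $n$ jumps by $1/s$, discarding the geometric growth of $|(f^k)'(0)|$ along the $R$-block, so on the \v{S}tefan families its estimate is of order $n/s$ (unbounded), whereas the true variation stays below $2$. Thus the specific tools you propose to combine cannot produce a uniform constant; a genuinely new estimate would be required, which is why the statement remains a conjecture.
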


\begin{conjecture}\label{conj-s}
For every $f\in\x$ the density of the acip for $f$ is bounded above by
$2$.
\end{conjecture}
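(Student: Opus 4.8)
The plan is to attack the conjecture through the Frobenius--Perron (transfer) operator, of which $\rho$ is the fixed point. For a skew tent map the two laps give an explicit functional equation: writing $a=f(0)$, letting $x_R(y)=1-y/t$ be the right preimage and $x_L(y)$ the left preimage (which exists only for $y\ge a$, since the increasing lap maps onto $[a,1]$), invariance of $\rho$ reads
\[
\rho(y)=\frac{\rho(x_L(y))}{s}+\frac{\rho(x_R(y))}{t}\quad(y\in[a,1]),
\qquad
\rho(y)=\frac{\rho(x_R(y))}{t}\quad(y\in[0,a)).
\]
Since the extremal densities computed above (the family with kneading sequence $RLR^{2n}C$ as $a\to 1/2$) saturate the value $2$, any argument must be tight, so I would aim for a sharp maximum principle for this equation rather than a soft compactness bound.

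First I would localize the maximum. Set $M=\sup\rho$ and, for small $\delta>0$, pick $y^\ast$ with $\rho(y^\ast)>M-\delta$ (legitimate since $\rho$ is of bounded variation and has one-sided limits everywhere). If $y^\ast\in[0,a)$ then $\rho(y^\ast)=\rho(x_R(y^\ast))/t\le M/t$, impossible once $\delta<M(1-1/t)$; hence the maximum is attained on $[a,1]$, where both branches contribute and $M\le \rho(x_L^\ast)/s+\rho(x_R^\ast)/t$. The immediate lesson is that the crude estimate $M\le M(1/s+1/t)$ is vacuous because $1/s+1/t\ge 1$; the entire content of the conjecture is that $\rho$ cannot be simultaneously close to $M$ at both preimages of $y^\ast$ except in a degenerate limit.

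By Theorem~\ref{itn} we have $\rho=\rohat/\int\rohat$, so the conjecture is equivalent to the clean inequality $\sup_y\rohat(y)\le 2\int\rohat\,d\lambda$. To prove it I would iterate the localization, tracking the \emph{itinerary of the maximum}: at each stage follow the preimage where $\rho$ is large, recording a weight $1/s$ for each left step and $1/t$ for each right step, until the accumulated self-reference forces an unconditional bound on $M$. Into this I would feed the two exact constraints the geometry supplies, namely $\rho(1)=t\,\rho(0)$ (from the unique preimage $f^{-1}(0)=\{1\}$) and the mixing inequality $t>1/s+1/t\ge 1$, and then compare an arbitrary $f\in\x$ with the explicit symmetric-limit extremizers $\rotil$ of~\eqref{h2}, attempting a rearrangement/monotonicity argument that shows those extremizers are worst possible.

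The main obstacle is precisely the failure of the transfer operator to contract in the sup norm: a naive maximum principle closes only to the vacuous $M\le M(1/s+1/t)$, and the genuine difficulty lives in the resonant regime near the boundary $1/s+1/t=1$ together with $a\to 1/2$, $t\to 1$, where the two-preimage feedback is strongest and the extremal densities concentrate. I expect that controlling this regime requires an a priori two-sided coupling between $\sup\rho$ and the boundary value $\rho(0)$, rather than a pointwise bound alone, and that making such a coupling uniform as the slopes degenerate is where the real work lies; this is also the step at which one would either reach the sharp constant $2$ or, failing that, first settle the weaker Conjecture~\ref{conj-w} with some explicit $K$.
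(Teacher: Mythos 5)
This statement is Conjecture~\ref{conj-s}, which the paper leaves \emph{open}: the authors give no proof, only the motivating computation of the limit densities $\rotil$ in~\eqref{h2}, whose maximal value tends to $2$ as $a\to 1/2$. So there is no paper proof to compare against; the only question is whether your argument settles the conjecture, and it does not. What you have written is a research program, not a proof. Your setup is correct --- the two-branch Frobenius--Perron equation, the identity $\rho(1)=t\,\rho(0)$, the localization of the near-maximum to $[a,1]$, and the observation that the naive maximum principle closes only to the vacuous $M\le M(1/s+1/t)$ --- but every step that would actually produce the bound $2$ is left as an intention. The ``iteration of the localization, tracking the itinerary of the maximum'' is never carried out, and there is no argument that the accumulated weights force any bound at all, let alone the sharp constant $2$: each application of the two-branch equation multiplies by the non-contracting factor $1/s+1/t\ge 1$, so it is not clear the iteration closes, and nothing you write rules out the possibility that it simply reproduces the vacuous estimate at every stage. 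Likewise the proposed ``rearrangement/monotonicity'' comparison with the extremizers $\rotil$ is named but not formulated: you do not specify what quantity is compared, what order structure is used, or why the symmetric limit family should be worst possible among all of $\x$ rather than merely among the \v{S}tefan-type maps from which it was derived.

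To your credit, you correctly diagnose why the problem is hard --- the resonant regime $1/s+1/t\to 1$, $t\to 1$, $a\to 1/2$, where sup-norm contraction of the transfer operator fails and the extremal densities concentrate --- and you candidly admit that controlling it ``is where the real work lies'' and might at best yield the weaker Conjecture~\ref{conj-w}. That is an honest assessment, but it means the proposal contains a genuine gap: after your argument the conjecture remains exactly as open as it is in the paper.
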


\section{Continuous dependence of densities}\label{s-dens}

The next thing to show is that the density $\rho$ of the acip depends
continuously on the map. In this section we will stress the dependence
on the map, so we will write $\rho_f$, $\rohat_f$, etc.

For $\rohat_f=\rohat$ given by~\eqref{e1}, let us look at its
approximations $\rohat_{\ell,f}$, given by
\begin{equation}\label{e1a}
\rohat_{\ell,f}=\sum_{k=0}^\ell\frac1{(f^k)'(0)}\chi_{_{[f^k(0),1]}}.
\end{equation}

We will be proving that $\rohat_g$ as a function of $g$ is continuous
at any given $f$. Thus we fix $f\in\y$. We will denote the ball of
radius $\delta$ in $\y$ (in the sup metric), centered at $f$ by
$B(f,\delta)$. We will also denote the $L^1$ norm by $\|\cdot\|$.

\begin{theorem}\label{cont-dens}
The map $g\mapsto\rho_g$ from $\y$ to $L^1([0,1])$ is continuous.
\end{theorem}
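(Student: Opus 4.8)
The plan is to prove continuity at a fixed $f\in\y$ by combining a uniform smallness of the tail of the series \eqref{e1} with the (term-by-term) continuity of its finite truncations \eqref{e1a}, and only at the end to pass from $\rohat_g$ to the normalized density $\rho_g=\rohat_g/\|\rohat_g\|$ (note $\|\rohat_g\|=\int\rohat_g$ since $\rohat_g\ge0$ by Theorem~\ref{itn}). First I would fix $T>1$ and $\delta_0>0$ with $|g'|\ge T$ on every lap for all $g\in B(f,\delta_0)$, which is possible because $f$ is piecewise expanding and the slopes vary continuously. Then $|(g^k)'(0)|\ge T^k$, so $|1/(g^k)'(0)|\le T^{-k}$; by the estimate behind \eqref{e2} the tails satisfy $\|\rohat_g-\rohat_{\ell,g}\|\le\sum_{k>\ell}T^{-k}$ uniformly in $g\in B(f,\delta_0)$, and $\var(\rohat_g)\le\sum_{k\ge1}T^{-k}=1/(T-1)$ and $\|\rohat_g\|_\infty\le T/(T-1)$ are uniformly bounded. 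Given $\eps>0$ I choose $\ell$ making the tail below $\eps$, so it suffices to prove that $g\mapsto\rohat_{\ell,g}$ is continuous at $f$ into $L^1$ for each fixed $\ell$.

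Next I would treat the finite sum \eqref{e1a} one term at a time. Each summand is $\bigl(1/(g^k)'(0)\bigr)\chi_{[g^k(0),1]}$, the coefficient bounded by $T^{-k}$. The endpoint map $g\mapsto g^k(0)$ is continuous, and since $\|\chi_{[y,1]}-\chi_{[y',1]}\|=|y-y'|$, the indicator factors move continuously in $L^1$. The coefficient is a product of slopes $g'(g^j(0))$, $0\le j<k$, which vary continuously so long as no $g^j(0)$ crosses the turning point $c$. Now the orbit of $0$ meets $c$ if and only if $0$ is periodic (hitting $c$ forces $0\mapsto1\mapsto0$). Hence, when $0$ is \emph{not} periodic under $f$, every coefficient is continuous at $f$, so $\rohat_{\ell,g}$ and therefore $\rohat_g$ are continuous at $f$; consequently $\|\rohat_g\|$ is continuous and equals $\|\rohat_f\|>0$, stays bounded away from $0$ near $f$, and $\rho_g=\rohat_g/\|\rohat_g\|$ is continuous at $f$.

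The hard part will be the periodic case, which is exactly where the naive argument collapses. If $0$ has period $p$ then $f^{p-2}(0)=c$, and for $g$ near $f$ the point $g^{p-2}(0)$ can lie on either side of $c$; crossing from one side to the other flips the sign of a single slope factor and so alters \emph{all} coefficients with index $\ge p-1$. Thus $\rohat_g$ is genuinely discontinuous at such $f$, and the two one-sided limits of $\rohat_g$ differ (one of them, dictated by the one-sided convention for $f'(c)$, equals $\rohat_f$; the other is a different scaling). The resolution I would use is that each limit is nonetheless a nonnegative function invariant under the Frobenius--Perron operator of $f$ — invariance passes to the limit because $g\to f$ uniformly — and hence, by uniqueness of the acip \cite{LiY}, a scalar multiple of $\rohat_f$; dividing by the integral cancels the scalar, so $\rho_g\to\rho_f$ from both sides. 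To make this precise I would argue by compactness: the uniform bounds on $\var(\rohat_g)$ and $\|\rohat_g\|_\infty$ make the family precompact in $L^1$ by Helly's selection theorem, so along any $g_n\to f$ a subsequence converges in $L^1$ to an $f$-invariant nonnegative density, which after normalization must be $\rho_f$; since every subsequence has a further subsequence tending to $\rho_f$, the whole family converges. The two technical points that need genuine care here are establishing a uniform lower bound on $\|\rohat_g\|$ near a periodic $f$ (so that the limits do not degenerate to $0$, using the positivity of \cite{K}) and verifying that Frobenius--Perron invariance survives the uniform limit $g_n\to f$.
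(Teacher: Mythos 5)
Your first two paragraphs are, almost line for line, the paper's own proof: the uniform tail bound coming from a slope bound $T>1$ on a neighborhood $B(f,\delta)$, the $L^1$-continuity of the truncations \eqref{e1a}, and the normalization estimate $\|\rho_f-\rho_g\|\le 2\|\rohat_f-\rohat_g\|/\|\rohat_f\|$. The divergence is your third paragraph, and it exposes a real defect in the paper's argument rather than one in yours. The paper deduces $\|\rohat_{\ell,f}-\rohat_{\ell,g}\|<\eps/3$ solely from $\sum_{k=1}^\ell|f^k(0)-g^k(0)|<\eps/3$; this tacitly assumes that each coefficient $1/(g^k)'(0)$ tends to $1/(f^k)'(0)$, which holds exactly when the finite itinerary of $0$ is stable under perturbation, and fails when the orbit of $0$ hits $c$, i.e.\ when $0$ is periodic of period $\ge 2$. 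You are right that $\rohat_g$ is then genuinely discontinuous at $f$, not just hard to control: for the period-$3$ map with $s=2$, $t=\tfrac32$ (orbit $0\mapsto\tfrac13\mapsto1\mapsto0$), perturbations with $g(0)>c_g$ give $\rohat_g\to\rohat_f=\tfrac97\bigl(\chi_{[0,1]}+\tfrac12\chi_{[1/3,1]}\bigr)$, while perturbations with $g(0)<c_g$ give $\rohat_g\to\tfrac{11}{14}\bigl(\chi_{[0,1]}+\tfrac12\chi_{[1/3,1]}\bigr)$, a different multiple of the same invariant step function. So the paper's intermediate claim that $g\mapsto\rohat_g$ is continuous on $\y$ is false, and its proof really establishes the theorem only at maps whose turning point is not periodic --- ironically, exactly the restriction the Introduction criticizes in \cite{BK}. (For the paper's own application, Theorem~\ref{flex-skew}, no harm results, since continuity is used there only along a family with constant kneading sequence, where itineraries are stable.) The theorem itself survives for the reason you give: every limit is a scalar multiple of $\rohat_f$, and normalization removes the scalar.

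Your plan for the periodic case (Helly compactness from the uniform bounds $\var(\rohat_g)\le 1/(T-1)$ and $\|\rohat_g\|_\infty\le T/(T-1)$, invariance of subsequential limits, uniqueness of the acip \cite{LiY}, then the subsequence-of-subsequence argument) is sound, so your proposal is correct in outline and is in fact more complete than the paper's proof. Of your two flagged points, invariance in the limit is routine: test $\int\phi\circ g_n\,\rohat_{g_n}\,d\lambda=\int\phi\,\rohat_{g_n}\,d\lambda$ against continuous $\phi$ and use the uniform sup bound together with uniform convergence $g_n\to f$. Non-degeneracy is the one needing real work, and \cite{K} alone will not supply it, being a statement about one fixed map rather than a uniform bound near $f$. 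A workable substitute: pass to a subsequence along which every coefficient $1/(g_n^k)'(0)$ converges, say to $v_k$; the limit of $\rohat_{g_n}$ is then $\sum_{r=0}^{p-1}J_r\chi_{[f^r(0),1]}$ with $J_r=\sum_{i\ge0}v_{r+ip}$, a step function over the finitely many points of the orbit of $0$, and \eqref{e6}, applied to each $g_n$ and passed to the limit, gives $\sum_r J_r=t_f>0$; if the limit vanished a.e., then $J_r=0$ for every $r$ with $f^r(0)<1$, and in particular $J_0=0$ would force $1=|v_0|\le\sum_{i\ge1}M^{-i}=1/(M-1)$, where $M$ is the (smallest admissible) derivative product along one period --- impossible once $M>2$, a condition to be checked on the single periodic orbit (immediate for mixing maps; the renormalizable corner of $\y$ with both slopes near $1$ needs extra care). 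Finally, one small slip: ``the orbit of $0$ meets $c$ if and only if $0$ is periodic'' fails when $f(0)=0$, since then $0$ is fixed and its orbit never meets $c$; but that case belongs with the easy ones, so nothing is lost.
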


\begin{proof}
Fix $f\in\y$ and $\eps>0$. There exists constants $T>1$ and $\delta>0$
such that if $g\in B(f,\delta)$ then the absolute values of the slopes
of $g$ are at least $T$. Then the sup distance between $\rohat_g$ and
$\rohat_{\ell,g}$ is not larger than
\[
\sum_{k=\ell+1}^\infty\frac1{T^k}=\frac1{T^\ell(T-1)}.
\]
Let us choose $\ell$ so large that this is less than $\eps/3$.

When we want to estimate the distance between $\rohat_{\ell,f}$ and
$\rohat_{\ell,g}$ (with $g$ close to $f$), the uniform distance will
not work well. This is due to the fact that it may happen that
$g(0)\ne f(0)$, so the distance between $\chi_{_{[g(0),1]}}$ and
$\chi_{_{[f(0),1]}}$ is 1. This is the reason why we are using the
$L^1$ distance. Note that our interval has length 1, so the $L^1$
distance is not larger than the uniform distance.

Making $\delta$ smaller if
necessary, we can guarantee that if $g\in B(f,\delta)$ then
\[
\sum_{k=1}^\ell|f^k(0)-g^k(0)|<\eps/3,
\]
so then $\|\rohat_{\ell,f}-\rohat_{\ell,g}\|<\eps/3$. This proves that
for a sufficiently small $\delta$, if $g\in B(f,\delta)$, then
$\|\rohat_f-\rohat_g\|<\eps$. Hence, the map $g\mapsto\rohat_g$ is
continuous at $f$.

Now we have to switch from $\rohat$ to
$\rho=\rohat/\int\rohat=\rohat/\|\rohat\|$. We have
\[\begin{split}
&\|\rho_f-\rho_g\|=\left\|\frac{\rohat_f}{\|\rohat_f\|}-
\frac{\rohat_g}{\|\rohat_g\|}\right\|\le\left\|\frac{\rohat_f}
{\|\rohat_f\|}-\frac{\rohat_g}{\|\rohat_f\|}\right\|+
\left\|\frac{\rohat_g}{\|\rohat_f\|}-\frac{\rohat_g}
{\|\rohat_g\|}\right\|\\
=\ &\frac{\|\rohat_f-\rohat_g\|}{\|\rohat_f\|}+\|\rohat_g\|
\frac{\big|\|\rohat_g\|-\|\rohat_f\|\big|}{\|\rohat_f\|
\cdot\|\rohat_g\|}=\frac{\|\rohat_f-\rohat_g\|+
\big|\|\rohat_g\|-\|\rohat_f\|\big|}{\|\rohat_f\|}
\le\frac{2\|\rohat_f-\rohat_g\|}{\|\rohat_f\|}.
\end{split}\]
Thus, the map $\rohat_g\mapsto\rho_g$ is continuous at $\rohat_f$, so
it is continuous everywhere (that is, on the image of the map
$g\mapsto\rohat_g$).

This proves that the map $g\mapsto\rho_g$ is continuous on $\y$.
\end{proof}

\section{Continuous dependence of metric entropy}\label{s-entr}

For $g\in\y$ denote by $\mu_g$ the acip for $g$.

\begin{theorem}\label{cont-metric-ent}
The map $g\mapsto h_{\mu_g}(g)$ from $\y$ to $\R$ is continuous.
\end{theorem}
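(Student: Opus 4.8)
The plan is to express the metric entropy through Rokhlin's formula and then reduce the problem to the $L^1$-continuity of the density that is already in hand. Since each $g\in\y$ is piecewise linear and expanding, its unique acip $\mu_g$ satisfies $h_{\mu_g}(g)=\int_0^1\log|g'|\,d\mu_g=\int_0^1\log|g'(x)|\,\rho_g(x)\,dx$, the Lyapunov exponent of $g$ (this is the identification already noted in the introduction). For a skew tent map with slopes $s_g$ and $-t_g$ and turning point $c_g=(t_g-1)/t_g$, the integrand is the step function $\log|g'|=(\log s_g)\chi_{_{[0,c_g)}}+(\log t_g)\chi_{_{(c_g,1]}}$, so
\[
h_{\mu_g}(g)=(\log s_g)\int_0^{c_g}\rho_g\,d\lambda+(\log t_g)\int_{c_g}^1\rho_g\,d\lambda.
\]
Thus continuity will follow once we know that the slopes, the turning point, and the relevant integrals of the density all vary continuously with $g$.

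First I would fix $f\in\y$ and recall that, by the parametrization of $\y$ via the absolute values of the slopes, the assignments $g\mapsto s_g$, $g\mapsto t_g$, and hence $g\mapsto c_g=(t_g-1)/t_g$, are continuous; moreover for $g$ in a small ball $B(f,\delta)$ the slopes stay in a fixed compact subinterval of $(1,\infty)$, so $\|\log|g'|\|_\infty\le M$ for some constant $M$. I would then split
\[
\bigl|h_{\mu_g}(g)-h_{\mu_f}(f)\bigr|\le\int_0^1\log|g'|\,|\rho_g-\rho_f|\,d\lambda+\int_0^1\bigl|\log|g'|-\log|f'|\bigr|\,\rho_f\,d\lambda.
\]
The first term is at most $M\,\|\rho_g-\rho_f\|$, which tends to $0$ as $g\to f$ by Theorem~\ref{cont-dens}. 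This is where the real content of the argument lives; the rest is routine estimation.

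For the second term, note that $\log|g'|-\log|f'|$ is small outside a controlled set: on the part of $[0,1]$ where $g$ and $f$ have matching monotonicity it equals either $\log s_g-\log s_f$ or $\log t_g-\log t_f$, both small by continuity of the slopes, while on the interval between $c_f$ and $c_g$ the two step functions may disagree by as much as $|\log s_f-\log t_f|$. The one point that needs care---and the closest thing to an obstacle---is this moving turning point: the integrand there is not small, but $c_g\to c_f$, so the offending interval has length tending to $0$, and since $\rho_f$ is of bounded variation (being given by the absolutely convergent series \eqref{e1}) it is bounded, whence its integral over that shrinking interval tends to $0$. Combining the two bounds shows the second term also tends to $0$, and the continuity of $g\mapsto h_{\mu_g}(g)$ follows. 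I do not expect mixing to play any role here, so the argument applies on all of $\y$ as stated.
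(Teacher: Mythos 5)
Your proof is correct and follows essentially the same route as the paper's: Rokhlin's formula, the identical two-term decomposition, boundedness of $\rho_f$ and of $\log|g'|$ near $f$, and Theorem~\ref{cont-dens} to kill the density term. The only difference is that you verify in detail the $L^1$-continuity of $g\mapsto\log|g'|$ (the moving turning point argument), a step the paper simply asserts as obvious.
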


\begin{proof}
By the Rohlin Lemma (\cite{P, R}), we have
\[
  h_{\mu_f}(f) = \int\log\abs{f'}\; d\mu_f
           = \int\log\abs{f'}\rho_f\; d\lambda,
\]
where $\lambda$ is the Lebesgue measure.

Fix $f\in\y$. The density $\rho_f$ is bounded above by some constant
$M$. There are also constants $N,\eta$ such that for every $g\in
B(f,\eta)$ the function $\log|g'|$ is bounded above by $N$.

On the other hand, in view of Theorem~\ref{cont-dens} and since the
map $g\mapsto\log\abs{g'}$ from $\y$ to $L^1([0,1])$ is continuous,
for every $\eps > 0$ there exists $\delta\in(0,\eta)$ such that
\begin{align*}
\big\|\log\abs{f'} - \log\abs{g'}\big\| & < \frac{\eps}{2M},\text{
  and}\\
\|\rho_f-\rho_g\| & < \frac{\eps}{2N},
\end{align*}
whenever $g \in B(f,\delta)$. Thus,
\begin{align*}
\abs{h_{\mu_f}(f) - h_{\mu_g}(g)} & = \abs{\int\log\abs{f'}\rho_f\;
d\lambda - \int\log\abs{g'}\rho_g\; d\lambda}\\
& \leq \int\big|\log\abs{f'} - \log\abs{g'}\big|\;\rho_f\; d\lambda +
\int\log\abs{g'}\abs{\rho_f - \rho_g}\; d\lambda\\
& \le M \big\|\log\abs{f'} - \log\abs{g'}\big\| + N\|\rho_f - \rho_g\|
< \eps.
\end{align*}
\end{proof}

\section{Rectangular root}\label{s-root}

To switch from a map $g$ to a map with the topological and metric
entropies halved, we need a square root procedure. The traditional one
works as follows. Remember that when we say ``linear'' we mean
what in algebra is called ``affine''.

Suppose we have a unimodal map $g$ with an acip $\nu$. Then we define
a unimodal map $G$ in the following way. On $[0,1/3]$,
$G(x)=(2+g(1-3x))/3$, on $[2/3,1]$, $G(x)=1-x$, on $[1/3,2/3]$ $G$ is
linear to make it continuous. Figure~\ref{fig1} shows $g$, $G$ and
$G^2$. For $G^2$ the intervals $[0,1/3]$ and $[2/3,1]$ are invariant,
and on each of them $G^2$ is linearly conjugate to $g$. The map $G$
maps each of them to the other one. All points of $[1/3,2/3]$, except
the fixed point, are eventually mapped to $[0,1/3]\cup[2/3,1]$ (except
in the case when all those points are fixed points of $G^2$). Therefore
$h(G)=(1/2)h(g)$ and $h_\kappa(G)=(1/2)h_\nu(g)$, where $\kappa$ is the
acip for $G$ obtained from $\nu$.
\begin{figure}[h]
\begin{center}
\includegraphics[height=48truemm]{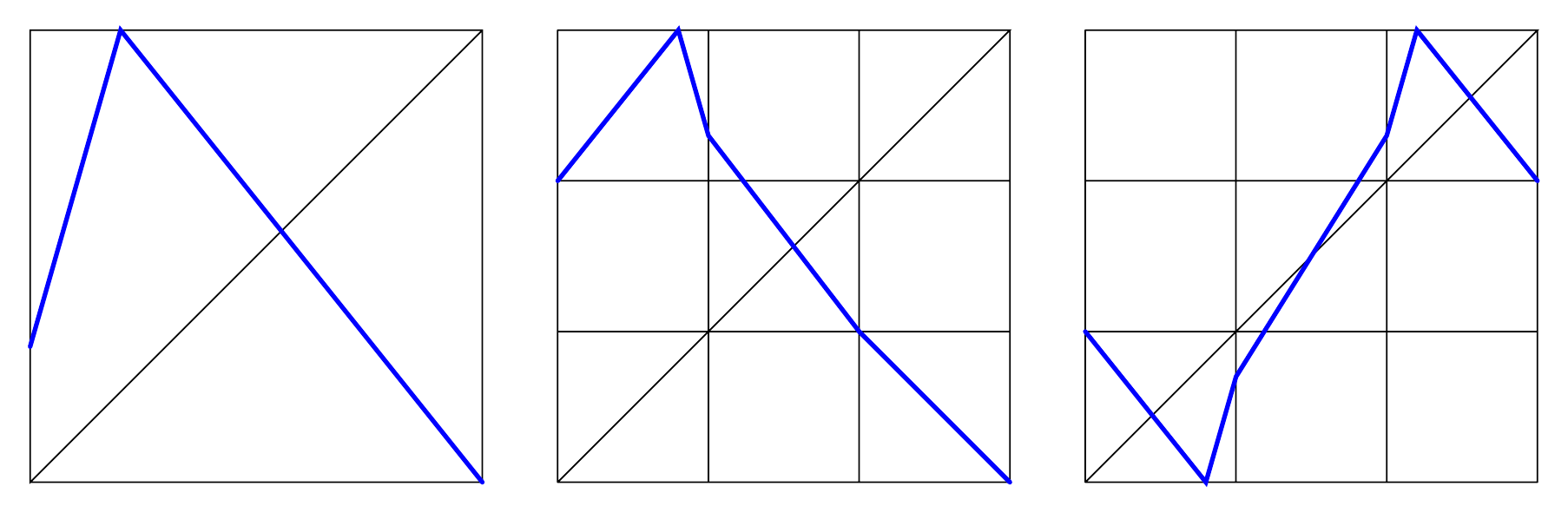}
\caption{The square root procedure. Maps $g$, $G$ and $G^2$.}\label{fig1}
\end{center}
\end{figure}

The problem with this procedure is that even if we start with a
piecewise expanding map, we end up with a map that has intervals on
which the absolute value of the slope is 1. To modify this procedure
in such a way that the resulting map is also piecewise expanding, let
us look at the original procedure from the point of view of
compositions of maps.

Let $\phi$ be the linear, orientation reversing map, that sends the
interval $[0,1/3]$ onto the interval $[0,1]$. Then $\phi(x)=1-3x$ and
$\phi^{-1}(x)=(1-x)/3$. Similarly, let $\psi$ be the linear
orientation preserving map, sending the interval $[2/3,1]$ onto the
interval $[0,1]$. Then $\psi(x)=3x-2$ and $\psi^{-1}(x)=(2+x)/3$. Now
we see that on $[0,1/3]$ we have $G=\psi^{-1}\circ g\circ\phi$ and on
$[2/3,1]$ we have $G=\phi^{-1}\circ\psi$. Since $G$ sends $[0,1/3]$
onto $[2/3,1]$ and vice versa, on $[0,1/3]$ we have
$G^2=\phi^{-1}\circ g\circ\phi$ and on $[2/3,1]$ we have
$G^2=\psi^{-1}\circ g\circ\psi$. This explains why on both intervals
$G^2$ is linearly conjugate to $g$.

If you look at Figure~\ref{fig1}, you see nine small squares in a big
square. If we change some of them into rectangles, our procedure will
work better. Therefore we will call this procedure the
\emph{rectangular root}.

If $g(0)>0$ then we define the rectangular root procedure by changing
in the definition of $\phi$ the interval $[0,1/3]$ to $[0,(1+\eps)/3]$
for some sufficiently small $\eps>0$ (see Figure~\ref{fig2}). The
slopes of $G$ on $[0,(1+\eps)/3]$ are now equal to the slopes of $g$
divided by $1+\eps$, so if $\eps$ is sufficiently small, their
absolute values are still larger than 1. The slope of $G$ on $[2/3,1]$ is
$-(1+\eps)$. The interval $[(1+\eps)/3,2/3]$ is mapped by $G$ onto a
larger interval, so the absolute value of the slope is larger than 1
as well.
\begin{figure}[h]
\begin{center}
\includegraphics[height=48truemm]{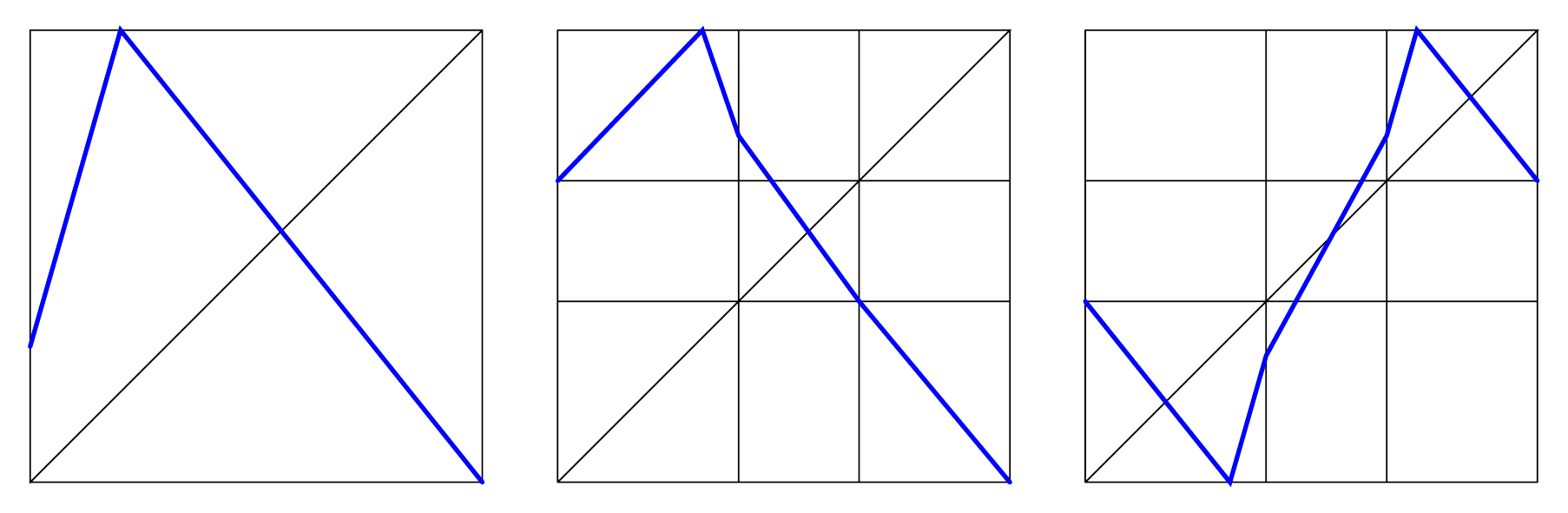}
\caption{The rectangular root procedure when $g(0)>0$. Maps $g$, $G$
  and $G^2$.}\label{fig2}
\end{center}
\end{figure}

If $g(0)=0$, then we simply remove the middle interval. That is, we
take as $\phi$ the linear orientation reversing maps, sending the
interval $[0,(1+\eps)/2]$ onto the interval $[0,1]$, and as $\psi$ the
linear orientation preserving map, sending the interval
$[(1+\eps)/2,1]$ onto the interval $[0,1]$. Then we set
$G=\psi^{-1}\circ g\circ\phi$ on $[0,(1+\eps)/2]$ and
$G=\phi^{-1}\circ\psi$ on $[(1+\eps)/2,1]$ (see Figure~\ref{fig3}).
As in the first case, if $\eps>0$ is sufficiently
small, then $G$ is piecewise expanding.
\begin{figure}[h]
\begin{center}
\includegraphics[height=48truemm]{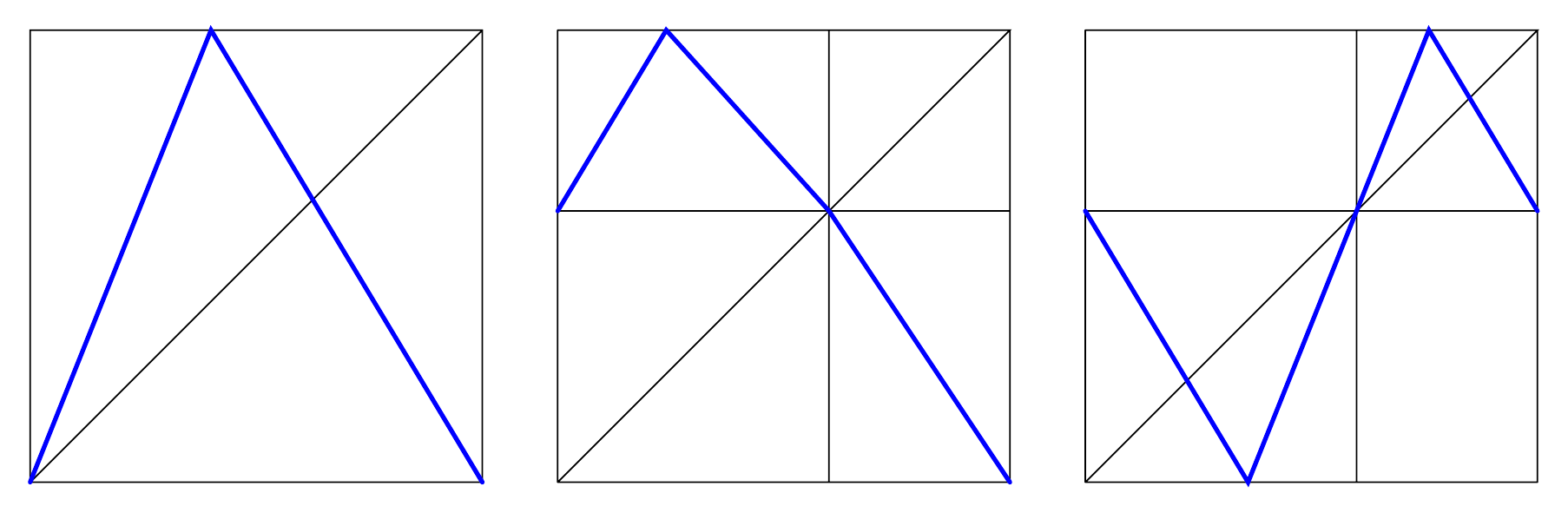}
\caption{The rectangular root procedure when $g(0)=0$. Maps $g$, $G$
  and $G^2$.}\label{fig3}
\end{center}
\end{figure}

\section{Main Theorems}\label{s-main}

The first flexibility result is about mixing skew tent maps (the
space $\x$).

\begin{MainTheorem}\label{flex-skew}
For every pair $a,b\in\R$ with $\frac12\log2<a\leq\log2$ and $0<b\leq
a$ there exists a piecewise expanding mixing skew tent map $f$ for
which $h(f)=a$ and $h_\mu(f)=b$, where $\mu$ is the acip for $f$.
\end{MainTheorem}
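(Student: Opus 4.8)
The plan is to study the map $\Phi=(h,h_\mu)\colon\x\to\R^2$ sending a mixing skew tent map $f$ (with slopes $s$ and $-t$) to the pair of its topological and metric entropies, and to show that the image of $\Phi$ contains the whole region $\{(a,b):\tfrac12\log2<a\le\log2,\ 0<b\le a\}$. Both coordinates are continuous: the metric entropy by Theorem~\ref{cont-metric-ent}, and the topological entropy (which is moreover monotone in each slope) by the classical kneading theory for the skew tent family. Two boundary computations drive the argument. First, for a \emph{symmetric} map $s=t$ one has $|f'|\equiv s$, so by the Rohlin formula $h_\mu=\int\log|f'|\,d\mu=\log s=h$; letting $s=t$ range over $[\sqrt2,2]$ realizes the diagonal edge $b=a$ for every $a\in[\tfrac12\log2,\log2]$. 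Second, I expect that as the map degenerates toward a corner of parameter space ($s\to\infty$, which forces $t\to1$) the metric entropy tends to $0$. Granting these, I would fix the target value $a$, move along the level set $\{h=a\}$, and invoke the intermediate value theorem for $h_\mu$ restricted to this arc.

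I would realize the level arc explicitly. Since $h$ is continuous and strictly monotone in each slope, for each fixed $s\ge e^{a}$ there is a unique $t=\tau(s)$ with $h(s,\tau(s))=a$: as $t$ increases from the mixing boundary to the boundary $\tfrac1s+\tfrac1t=1$, the entropy increases continuously from $\tfrac12\log2$ to $\log2$ and hence passes through $a$, and $\tau$ is continuous. This curve starts at the symmetric map $s=t=e^{a}$, where $h_\mu=a$, and runs off to $s\to\infty$. For the extreme value $a=\log2$ the same role is played directly by the outer boundary $t=s/(s-1)$, on which every map is conjugate to the full tent map and $h\equiv\log2$. Along $\{h=a\}$ the kneading sequence is constant (topological entropy determines it), so the maps stay in a single class $\x_n$ with $n$ fixed, which is precisely what is needed to apply the density estimate below.

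The heart of the matter is to prove that $h_\mu\to0$ as $s\to\infty$ along this arc. Writing $c=(t-1)/t$ for the length of the left lap, the Rohlin formula gives $h_\mu=\int\log|f'|\rho\,d\lambda=\log s\int_0^c\rho\,d\lambda+\log t\int_c^1\rho\,d\lambda$. On the arc $t\le s/(s-1)$, so $t\to1$ and $c\le 1/(s-1)\to0$; moreover the map lies in a fixed class $\x_n$ with $s\to\infty$, so by Theorem~\ref{varrho} its density satisfies $\rho\le1+\eps$ (and is bounded below by \cite{K}). Hence $\int_0^c\rho\le(1+\eps)c$ and
\[
h_\mu\le(1+\eps)\Bigl(c\log s+\log t\Bigr)\le(1+\eps)\Bigl(\tfrac{\log s}{s-1}+\log\tfrac{s}{s-1}\Bigr)\xrightarrow[s\to\infty]{}0 .
\]
The subtlety I expect to wrestle with is exactly this coupling: to keep $h=a$ strictly above $\tfrac12\log2$ while driving $s\to\infty$ one is forced into the thin sliver of parameters where $t\to1$, and one must simultaneously know that the acip does not concentrate on the short but strongly expanding left lap — this is what the uniform density bound of Theorem~\ref{varrho} supplies. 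Once $h_\mu$ is seen to be continuous along the arc, to equal $a$ at the symmetric end, and to tend to $0$ at the other, the intermediate value theorem produces a map with $h(f)=a$ and $h_\mu(f)=b$ for every prescribed $b\in(0,a]$, completing the proof.
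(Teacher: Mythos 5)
Your proposal is correct, and its skeleton is the same as the paper's: fix $a$, work along the entropy level set $\{h=a\}$ in $\x$, note that at the symmetric end $s=t=e^a$ the metric entropy equals $a$, show that the metric entropy tends to $0$ as $s\to\infty$ along the level set (which is legitimate because the kneading sequence, hence the class $\x_n$, is constant along it, so Theorem~\ref{varrho} applies), and finish with the intermediate value theorem via Theorem~\ref{cont-metric-ent}. Two steps differ at the lemma level, and the differences are worth recording. First, where you rebuild the level arc $t=\tau(s)$ from ``strict monotonicity of $h$ in each slope'' and from ``entropy determines the kneading sequence on $\x$'': these two assertions are true but are not elementary folklore --- they are precisely the content of Theorem~C of \cite{MV}, which the paper invokes directly to obtain a continuous decreasing function $\beta:(1,\gamma]\to[1,\infty)$ with $\lim_{t\searrow 1}\beta(t)=\infty$ parametrizing the level set, with constant kneading sequence along it. You should cite that result explicitly; with the citation in place, your construction of the arc is a repackaging of the paper's, and without it this is the one point where your argument would be incomplete. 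Second, your proof that $h_\mu\to 0$ at the degenerate end is genuinely different and arguably more quantitative: the paper bounds $h_{\mu_t}(f_t)$ by the static entropy of the generating partition $\{[0,c_t],(c_t,1]\}$ and uses $\mu_t([0,c_t])\to 0$, whereas you use the Rohlin formula $h_\mu=\log s\,\mu([0,c])+\log t\,\mu([c,1])$ together with $c\le 1/(s-1)$, $t\le s/(s-1)$ and the bound $\rho\le 1+\eps$ from Theorem~\ref{varrho} to get an explicit upper bound tending to $0$; both routes rest on the same key input, Theorem~\ref{varrho}. Likewise, your computation $h_\mu=\log s=h$ for $s=t$ via Rohlin is a clean substitute for the paper's appeal to the acip being the measure of maximal entropy at the symmetric parameter. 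One minor slip: the symmetric map with $s=t=\sqrt2$ is not mixing (there $f(0)$ equals the fixed point), so your diagonal edge should be restricted to $a\in\left(\frac12\log 2,\log 2\right]$; this does not affect the argument, since the theorem assumes $a>\frac12\log2$.
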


\begin{proof}
If $\log s_a=a$, then $f\in\x$ with slopes $s_a$ and $-s_a$ has
topological entropy $a$. By~\cite[Theorem~C]{MV} and the subsequent
remark, there exists a number $\gamma>1$ and a continuous decreasing
function $\beta:(1,\gamma]\to[1,\infty)$ such that
$\lim_{t\searrow 1} \beta(t) = \infty$ and for $g\in\x$ we have
$h(g)=a$ if and only if the slopes of $g$ are $\beta(t),-t$ for
$t \in (1,\gamma]$. This in particular implies that $s_a\le\gamma$ and
$\beta(s_a)=s_a$. Moreover, by~\cite[Theorem~C]{MV}, the skew tent
maps with the slopes $\beta(t),-t$ have the same kneading sequence for
all $t$, so we will be able to use Theorem~\ref{varrho} for them.

Let $f_t\in\x$ be the function with slopes $\beta(t)$ and $-t$, and
let $\mu_t$ be its acip. For $t=s_a$ this measure is also the measure
with maximal entropy, so $h_{\mu_t}(f_t)=a$. As $t$ goes to 1, then
$\beta(t)$ goes to infinity and the turning point $c_t$ goes to 0, so
by Theorem~\ref{varrho} $\mu_t([0,c_t])$ goes to 0. Since the
partition of $[0,1]$ into $[0,c_t]$ and $(c_t,1]$ is a generator, this
implies that $h_{\mu_t}(f_t)$ goes to 0. Therefore, by
Theorem~\ref{cont-metric-ent} and continuity of the function $\beta$,
there exists $t$ such that $h_{\mu_t}(f_t)=b$.
\end{proof}

The next theorem shows flexibility of entropies for piecewise
expanding unimodal maps.

\begin{MainTheorem}\label{flex-uni}
For every pair $a,b\in\R$ with $0<a\leq\log2$ and $0<b\leq a$ there
exists a piecewise expanding unimodal map $f$ for which $h(f)=a$ and
$h_\mu(f)=b$, where $\mu$ is the acip for $f$.
\end{MainTheorem}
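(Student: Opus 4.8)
The plan is to reduce Theorem~\ref{flex-uni} to Theorem~\ref{flex-skew} by repeatedly applying the rectangular root construction of Section~\ref{s-root}, each application of which halves both the topological and the metric entropy while keeping us inside the class of piecewise expanding unimodal maps.

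First I would observe that the dyadic intervals $\left(\tfrac{\log 2}{2^{k+1}},\tfrac{\log 2}{2^{k}}\right]$, $k=0,1,2,\dots$, partition $(0,\log 2]$. Hence, given $a\in(0,\log 2]$, there is a unique integer $k\ge 0$ with $\tfrac12\log 2<2^{k}a\le\log 2$. Set $A=2^{k}a$ and $B=2^{k}b$. Since $0<b\le a$, we have $0<B\le A$, and $A$ lies in the range $(\tfrac12\log 2,\log 2]$ allowed by Theorem~\ref{flex-skew}. That theorem then produces a mixing skew tent map $g\in\x$ with $h(g)=A$ and $h_{\mu_g}(g)=B$, where $\mu_g$ is its acip.

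Next I would apply the rectangular root procedure of Section~\ref{s-root} to $g$ a total of $k$ times, obtaining maps $g=G_0,G_1,\dots,G_k=f$. The construction turns a piecewise expanding unimodal map into another piecewise expanding unimodal map (this is the whole point of replacing the squares by rectangles, so that no slope of absolute value $1$ appears), so $f$ lies in the required class; and since the acip is unique for such maps, the measure built from the acip of $G_j$ is exactly the acip $\mu_{G_{j+1}}$ of $G_{j+1}$. As recorded in Section~\ref{s-root}, each step satisfies $h(G_{j+1})=\tfrac12 h(G_j)$ and $h_{\mu_{G_{j+1}}}(G_{j+1})=\tfrac12 h_{\mu_{G_j}}(G_j)$. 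Therefore $h(f)=A/2^{k}=a$ and $h_{\mu_f}(f)=B/2^{k}=b$, as desired.

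The routine part is the bookkeeping above; the only point that needs genuine care---and which Section~\ref{s-root} is designed to supply---is that the entropy-halving identities, stated there for the classical square root, persist for the rectangular modification. This holds because $G^2$ is still linearly conjugate to the input map on each of the two invariant subintervals, while the middle interval is transient (its points eventually leave it, the slope there having absolute value larger than $1$), so it contributes nothing to either entropy; the acip of $G$ splits evenly between the two halves, giving $h_{\mu_G}(G)=\tfrac12 h_{\mu_G}(G^2)=\tfrac12 h_{\mu_g}(g)$, and likewise for topological entropy. I expect this verification---carrying the conjugacy and the transience of the middle interval through the rectangular construction at every one of the $k$ iterations---to be the only real obstacle, the rest being an immediate combination of Theorem~\ref{flex-skew} with the doubling construction.
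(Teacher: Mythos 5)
Your proposal is correct and follows essentially the same route as the paper: choose $k\ge 0$ with $\tfrac12\log 2<2^{k}a\le\log 2$, invoke Theorem~\ref{flex-skew} for the pair $(2^{k}a,2^{k}b)$, and apply the rectangular root procedure $k$ times, halving both entropies at each step. The paper merely phrases this as two cases ($a>\tfrac12\log 2$ directly, $a\le\tfrac12\log 2$ via $n\ge 1$ roots), and, like you, relies on the entropy-halving and conjugacy structure of Section~\ref{s-root} carrying over from the square root to the rectangular modification.
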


\begin{proof}
If $a>\frac12\log 2$, this follows from Theorem~\ref{flex-skew}. If
$a\le\frac12\log 2$, there is $n\ge 1$ such that
$\frac12\log 2<2^n a\le\log2$. Then use Theorem~\ref{flex-skew} to
find $g\in\x$ with $h(g)=2^n a$ and $h_\nu(g)=2^n b$, where $\nu$ is
the acip for $g$. Finally, use $n$ times the rectangular root
procedure to get the desired $f$. Each time we use this procedure,
both topological and metric entropy get divided by 2. Then $f$
is a piecewise expanding unimodal map with $h(f)=a$ and $h_\mu(f)=b$.
\end{proof}

The following proposition shows that in Theorem~\ref{flex-uni} we
cannot replace unimodal maps by skew tent maps.

\begin{proposition}\label{example}
There is no piecewise expanding skew tent map with topological entropy
$\frac14\log 2$ and metric entropy smaller than
$\frac14\log\frac{16}{15}$.
\end{proposition}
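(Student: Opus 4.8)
The plan is to use that $\tfrac14\log2<\tfrac12\log2$, so any such $f$ lies in $\y\setminus\x$ and is \emph{not} topologically mixing; equivalently $f(0)$ lies to the right of (or at) the fixed point $q=t/(t+1)$. First I would record the elementary band structure of a non-mixing skew tent map: one checks directly from the formulas that $f([0,q])=[q,1]$ and $f([q,1])=[0,q]$, so the two halves are interchanged and $g:=f^2|_{[0,q]}$ maps $[0,q]$ into itself. A short computation gives $g(x)=st(c-x)$ on $[0,c]$ and $g(x)=t^2(x-1)+t$ on $[c,q]$, i.e. a ``valley'' map with slopes $-st$ and $t^2$ and minimum $g(c)=0$. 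Since the two bands are interchanged and carry conjugate return maps, $h(g)=h(f^2)=2h(f)=\tfrac12\log2$, and, because the unique acip $\mu$ of $f$ restricts (after normalization) to the unique acip $\nu$ of $g$, the behaviour of entropy under iteration together with the decomposition of $\mu$ over the $2$-cycle of bands yields $h_\mu(f)=\tfrac12\,h_\nu(g)$.

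Next I would identify $g$ with a skew tent map on the mixing boundary and compute its metric entropy. The core of $g$ is $[0,s(t-1)]$, and reflecting this interval turns $g|_{\text{core}}$ into a standard (third model) skew tent map $G$ with left slope $t^2$ and right slope $-st$. Since $h(G)=\tfrac12\log2$, the map $G$ sits exactly on the boundary of $\x$, which forces the parameter relation $s^2t^3=s+t$. Now the orbit of the left endpoint of $G$ lands on the fixed point of $G$ in one step, so Theorem~\ref{itn} gives a two-step density: a constant to the left of the fixed point and a larger constant to its right, with the lap of slope $st$ carrying $\nu$-measure $\tfrac{(st)^2+1}{2(st)^2}\ge\tfrac12$. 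Keeping only this lap and discarding the (positive) contribution of the other slope gives the clean bound $h_\nu(g)\ge\tfrac12\log(st)$, hence $h_\mu(f)\ge\tfrac14\log(st)$.

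It remains to bound $st$ from below, and here the relation $s^2t^3=s+t$ with $s,t>1$ does the work. If $st\le\tfrac{16}{15}$, then since $s<st$ and $t<st$ both slopes satisfy $1<s,t<\tfrac{16}{15}$, whence
\[
s^2t^3<\Bigl(\tfrac{16}{15}\Bigr)^{5}<2<s+t,
\]
contradicting $s^2t^3=s+t$ (the inequality $16^5<2\cdot 15^5$ is the only arithmetic needed). Therefore $st>\tfrac{16}{15}$, and consequently $h_\mu(f)\ge\tfrac14\log(st)>\tfrac14\log\tfrac{16}{15}$, as claimed.

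The main obstacle is the renormalization step: correctly identifying $g=f^2|_{[0,q]}$ with a boundary skew tent map in the right orientation—its core is a \emph{proper} subinterval of $[0,q]$, so one must pass to the core before applying Theorem~\ref{itn}—and justifying $h_\mu(f)=\tfrac12\,h_\nu(g)$. Once this is in place the remaining estimate is routine. It is worth noting that the constant is not sharp: the actual infimum of $h_\mu$ over these maps is $\tfrac12\log\varphi\,(1-\tfrac1{2\varphi})$ with $\varphi$ the golden ratio, approached as $t\to1$; the value $\tfrac{16}{15}$ is merely a convenient rational with $(\tfrac{16}{15})^{5}<2$ that renders the final inequality elementary.
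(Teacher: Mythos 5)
Your proof is correct, but it takes a genuinely different route from the paper's. The paper renormalizes \emph{twice}: since $h(f)=\frac14\log 2$, the second renormalization is a skew tent map with slopes $s^2t^2$ and $-st^3$ and entropy $\log 2$, hence full, so its acip is the \emph{Lebesgue} measure and $s+t=s^2t^3$; its metric entropy is then the binary entropy $-c\log c-(1-c)\log(1-c)$ of its turning point $c=1/(s^2t^2)$, and the bound $\max(s,t)<2$ (extracted from the same relation) places $c$ in $(1/16,15/16)$, giving $h_\lambda>\log\frac{16}{15}$, which is divided by $4$. You stop after \emph{one} renormalization, at the map $G$ with slopes $t^2$, $-st$ sitting on the boundary of $\x$, and instead compute its acip explicitly from Theorem~\ref{itn}: since $G(0)$ is the fixed point of $G$, the unnormalized density is $1$ to the left of the fixed point and $st$ to its right (your value $\frac{(st)^2+1}{2(st)^2}$ for the measure of the lap of slope $-st$ checks out, using $t^2\bigl((st)^2-1\bigr)=st$, which is equivalent to $s^2t^3=s+t$), whence $h_\nu(G)\ge\frac12\log(st)$ by Rohlin's formula, and the same algebraic relation gives $st>\frac{16}{15}$. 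Both arguments rest on the same two pillars --- the renormalization structure forced by entropy $\frac14\log 2$, and Rohlin's formula --- and your assertion that $h(G)=\frac12\log 2$ forces $G$ onto the boundary of $\x$ is justified by exactly the background fact the paper itself invokes (a skew tent map of entropy $\log 2$ must be full, i.e.\ the reciprocals of its slopes sum to $1$); so this is not a gap, merely terseness at the same level as the paper's ``$f$ is twice renormalizable.'' What each approach buys: the paper's second renormalization makes the acip trivial (Lebesgue), so no density computation is needed; your approach makes genuine use of the ITN formula to handle the non-trivial acip of the first renormalization, and in exchange yields the structurally cleaner intermediate bound $h_\mu(f)\ge\frac14\log(st)$, which --- as you observe --- exposes that the constant $\frac{16}{15}$ is far from sharp, since along the curve $s^2t^3=s+t$ with $s,t>1$ one in fact has $st$ bounded below by a constant larger than $1.3$.
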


\begin{proof}
Let $f\in\y$ have entropy $h(f)=\frac14\log 2$. This means that $f$ is
twice renormalizable (see, e.g.,~\cite{CE}). If the slopes of $f$ are
$s$ and $-t$, after the first renormalization we get a skew tent map
with slopes $t^2$ and $-st$ and entropy $\frac12\log 2$, linearly
conjugate to $f^2$ restricted to an invariant interval. After the
second renormalization we get a skew tent map $g$ with slopes $s^2t^2$
and $-st^3$ and entropy $\log 2$, linearly conjugate to $f^4$
restricted to an invariant interval.

Entropy $\log 2$ for a skew tent map means that the sum of reciprocals
of the absolute values of the slopes is 1, and that the acip is the
Lebesgue measure. Thus, we have $s+t=s^2t^3$. Remember that we assume
$s,t>1$.

Set $T=\max(s,t)$. We have $T^2<s^2t^3=s+t\le 2T$, so $T<2$.
Hence, absolute values of both slopes of the second renormalization
$g$ are smaller than 16.
Therefore $1/16<c<15/16$, so either $\lambda([0,c])>1/16$ or
$\lambda((c,1])>1/16$. Then,
\[
h_\lambda(g)>-\frac1{16}\log\frac1{16}-\frac{15}{16}\log\frac{15}{16}
=\log 16-\frac{15}{16}\log 15>\log 16-\log 15=\log\frac{16}{15}.
\]
Thus, $h(f)=\frac14 h(g)>\frac14\log\frac{16}{15}$.
\end{proof}

\section{An interesting observation}\label{s-obs}

Hidden in~\cite{ITN} is the formula called by the authors
``$f$-expansion''. Namely, for $f\in\y$ and $x\in[0,1]$ we have,
(translating to our notation)
\begin{equation}\label{e6a}
x=1-\frac1t\sum_{k=0}^\infty\frac1{(f^k)'(x)}.
\end{equation}
It is easy to see that this can be reinterpreted as
\begin{equation}\label{e6b}
f(x)=\sum_{k=0}^\infty\frac1{(f^k)'(x)}
\end{equation}
for all $x>c$. However, since~\eqref{e6b} does not work for $x<c$, it
does not look important. On the other hand, if we take in~\eqref{e6a}
$x=1$, we get
\begin{equation}\label{e6}
\sum_{k=0}^\infty\frac1{(f^k)'(1)}=0,
\end{equation}
which is much more interesting.

If $s=t$, then the left-hand side of~\eqref{e6} is the value of the
Milnor-Thurston kneading determinant at $1/s$. However, $s$ is the
exponential of the topological entropy of $f$. Thus,
formula~\eqref{e6} is a generalization and a different interpretation
of the Milnor-Thurston formula connecting the topological entropy and
the kneading determinant for unimodal maps.

\end{document}